\documentclass[10pt,a4paper,reqno]{amsart}
\usepackage{epsfig}
\usepackage{color}
\usepackage{amssymb,amsmath,amsthm,amstext,amsfonts}
\usepackage{amssymb,amsmath,amsthm,amstext,amsfonts}
\usepackage{amsmath,amstext,amsthm,amsfonts}
\usepackage{color}
\usepackage[mathscr]{eucal}

\usepackage{hyperref}
\RequirePackage[dvipsnames]{xcolor} 
\definecolor{halfgray}{gray}{0.55} 
\definecolor{webgreen}{rgb}{0,0.5,0}
\definecolor{webbrown}{rgb}{.6,0,0} \hypersetup{%
  colorlinks=true, linktocpage=true, pdfstartpage=3,
  pdfstartview=FitV,%
  breaklinks=true, pdfpagemode=UseNone, pageanchor=true,
  pdfpagemode=UseOutlines,%
  plainpages=false, bookmarksnumbered, bookmarksopen=true,
  bookmarksopenlevel=1,%
  hypertexnames=true,
  pdfhighlight=/O,
  urlcolor=webbrown, linkcolor=RoyalBlue,
  citecolor=webgreen, 
  pdftitle={},%
  pdfauthor={},%
  pdfsubject={2000 MAthematical Subject Classification: Primary:},%
  pdfkeywords={},%
  pdfcreator={pdfLaTeX},%
  pdfproducer={LaTeX with hyperref}%
}


\usepackage{psfrag}
\usepackage{url}
\usepackage{epstopdf}
\usepackage{epic,eepic}
\usepackage{upgreek}
\usepackage{amssymb}
\usepackage{mathrsfs}
\usepackage{verbatim}
\usepackage{mathrsfs}
\usepackage{amstext}
\usepackage{amsthm}
\usepackage{amssymb}
\usepackage{graphicx}

\theoremstyle{plain}
\newtheorem{theorem}{Theorem}[section]
\newtheorem{proposition}[theorem]{Proposition}
\newtheorem{lemma}[theorem]{Lemma}
\newtheorem{corollary}[theorem]{Corollary}
\newtheorem{maintheorem}{Theorem}

\theoremstyle{definition}
\newtheorem{remark}[theorem]{Remark}
\newtheorem{example}[theorem]{Example}

\begin{document}
\title[A variational principle for the metric mean dimension of level sets]{A variational principle for the metric mean dimension of level sets}

\author{Lucas Backes}
\author{Fagner B. Rodrigues}

\address{\noindent Departamento de Matem\'atica, Universidade Federal do Rio Grande do Sul, Av. Bento Gon\c{c}alves 9500, CEP 91509-900, Porto Alegre, RS, Brazil.}

\email{lucas.backes@ufrgs.br}
\email{fagnerbernardini@gmail.com}

\date{\today}

\keywords{Metric mean dimenesion; variational principle; level sets}

\subjclass[2020]{Primary: 
37A35, 
37B40, 
37D35; 
Secondary: 
37A05, 	
37B05, 
}


\begin{abstract}
We prove a variational principle for the upper and lower metric mean dimension of level sets
\begin{displaymath}
\left\{x\in X: \lim_{n\to\infty}\frac{1}{n}\sum_{j=0}^{n-1}\varphi(f^{j}(x))=\alpha\right\}
\end{displaymath}
associated to continuous potentials $\varphi:X\to \mathbb R$ and continuous dynamics $f:X\to X$ defined on compact metric spaces and exhibiting the specification property. This result relates the upper and lower metric mean dimension of the above mentioned sets with growth rates of measure-theoretic entropy of partitions decreasing in diameter associated to some special measures. Moreover, we present several examples to which our result may be applied to. Similar results were previously known for the topological entropy and for the topological pressure.
\end{abstract}

\maketitle

\section{Introduction}

One of the most important notions in Dynamical Systems is that of \emph{topological entropy}. It is a topological invariant and, roughly speaking, measures how chaotic a system is. In particular, it is an effective tool to decide whether two systems are conjugated or not. Nevertheless, there are plenty of systems with infinite topological entropy (for instance, they form a $C^0$-generic set in the space of homeomorphisms of a compact manifold \cite{Yano} with dimension greater than one) and thus, in this context, the entropy is not useful anymore. Therefore, in order to study these types of systems, new dynamical quantities are required and an example of such a quantity is the \emph{metric mean dimension}. 

The notion of metric mean dimension was introduced by Lindenstrauss and Weiss in \cite{LW} as metric-dependent analog of the \emph{mean dimension}, a topological invariant associated to a dynamical system which was introduced by Gromov \cite{Gromov}. This last notion has several applications, like in the study of embedding problems \cite{GT}, and the metric mean dimension presents an upper bound to it. But more than that, the metric mean dimension turned out to be useful in several contexts like in the study of compression \cite{GS2, GS3}.

In the present paper we give a modest contribution to the study of ergodic theoretical aspects of the metric mean dimension by presenting a variational principle. Previous connections between ergodic theory and metric mean dimension were presented, for instance, by Lindenstrauss and Tsukamoto \cite{LT}, Velozo and Velozo \cite{VV}, Tsukamoto \cite{TSU2020},  Shi \cite{Shi}, Gutman and \'Spiewak \cite{GS} and Yang, Chen and Zhou \cite{YCZ}. For more on these works, see Section \ref{sec: related results}. The main novelty of our work with respect to the previously mentioned ones is that our variational principle holds for special subsets and not only for the whole phase space. More precisely, we consider level sets 
\begin{displaymath}
K_\alpha=\left\{x\in X: \lim_{n\to\infty}\frac{1}{n}\sum_{j=0}^{n-1}\varphi(f^{j}(x))=\alpha\right\}
\end{displaymath}
associated to continuous potentials $\varphi:X\to \mathbb R$ and continuous dynamics $f:X\to X$ defined on compact metric spaces exhibiting the specification property and present a relation between the \emph{upper and lower metric mean dimension} of the above mentioned sets and growth rates of measure-theoretic entropy of partitions decreasing in diameter associated to some special measures. This is the content of Theorem \ref{thm1}. In Section \ref{sec: examples} we present several examples to which our result is applicable. 

\subsection{Mutlifractal analysis} 
The general idea of multifractal analysis consists in decomposing the phase space into subsets of points with similar dynamical behavior, for instance, in sets of points with the same Birkhoff average, the same Lyapunov exponents or the same local entropies, and to describe the size of each of such subsets from a geometrical or topological viewpoint. The information (collection of numbers) obtained via this procedure for one such decomposition of the phase space is called a \emph{multifractal spectrum}. Then, in the best-case scenario the idea is that if one knows some of these spectra one could fully recover the dynamics (see for instance \cite{BPS,BPS2}). This phenomenon is sometimes called \emph{multifractal rigidity}. But even when we are not in such a nice world, we still can get useful information about the dynamics from these various spectra (see for instance \cite{Cli, Olsen, STVW, TV2}).
Our main result, Theorem \ref{thm1}, may be seen as a small contribution to the study of one such spectra, namely, the one obtained by measuring the size of level sets of Birkhoff averages with respect to the metric mean dimension. In particular, as a consequence of our result we get that the map $\alpha \mapsto \mathrm{\underline{mdim}_M}\,\Big(K_\alpha,f, d\Big)$ is concave when restricted to the set of parameters $\alpha\in \mathbb R$ for which $K_\alpha\neq \emptyset \empty$. As far as we know, this is the first time this spectrum was considered and we hope that our results may be of some more help in the study of multifractal analysis of systems with infinite topological entropy.

\section{Definitions and Statements}\label{sec: def and state}
Let $(X,d)$ be a compact metric space and $f \colon X \to X$ be a continuous map. Given $n\in \mathbb{N}$, we define the dynamical metric $d_n \colon X \times X \, \to \,[0,\infty)$ by
\begin{displaymath}
d_n(x,z)=\max\,\Big\{d(x,z),\,d(f(x),f(z)),\,\dots,\,d(f^{n-1}(x),f^{n-1}(z))\Big\}.
\end{displaymath}
It is easy to see that $d_n$ is indeed a metric and, moreover, generates the same topology as $d$. Furthermore, given $\varepsilon > 0$, $n \in \mathbb{N}$ and a point $x \in X$, we define the open $(n, \varepsilon)$-ball around $x$ by
\begin{displaymath}
\mathit{B}_{n}(x, \varepsilon) = \{ y \in X ; d_n( x, y) < \varepsilon \}.
\end{displaymath}
We sometimes call these $(n, \varepsilon)$-balls \emph{dynamical balls} of radius $\varepsilon$ and length $n$. We say that a set $E \subset X$ is \emph{$(n,\varepsilon)$--separated} by $f$ if $d_n(x,z) > \varepsilon$ for every $x,z \in E$.

\subsection{The metric mean dimension}\label{sse.mmd}

Given $n\in \mathbb{N}$ and $\varepsilon>0$, let us denote by $s(f,n,\varepsilon)$ the maximal cardinality of all $(n,\varepsilon)$--separated subsets of $X$ by $f$ which, due to the compactness of $X$, is finite. 

The \emph{upper metric mean dimension} of $f$ with respect to $d$ 
is given by 
$$\mathrm{\overline{mdim}_M}\,\Big(X,f,d\Big) = \limsup_{\varepsilon\,\to\, 0} \,\frac{h(f,\varepsilon)}{|\log \varepsilon|}$$
where
$$h(f,\varepsilon) = \limsup_{n\, \to\, \infty}\,\frac{1}{n}\,\log s(f,n,\varepsilon).$$
Similarly, the \emph{lower metric mean dimension} of $f$ with respect to $d$ is given by 
$$\mathrm{\underline{mdim}_M}\,\Big(X,f,d\Big) = \liminf_{\varepsilon\,\to\, 0} \,\frac{h(f,\varepsilon)}{|\log \varepsilon|}.$$
In the case when $\mathrm{\underline{mdim}_M}\,\Big(X,f,d\Big)=\mathrm{\overline{mdim}_M}\,\Big(X,f,d\Big)$ this common value is called the \emph{metric mean dimension} of $f$ with respect to $d$ and is denoted simply by $\mathrm{mdim_M}\,\Big(X,f,d\Big)$.

Recall that the \emph{topological entropy} of the map $f$ is given by
\begin{displaymath}
h_{\text{top}}(f) = \lim_{\varepsilon \, \to \, 0}\,h(f,\varepsilon).
\end{displaymath}
Consequently, $ \mathrm{\overline{mdim}_M}\,\Big(X,f,d\Big) =\mathrm{\underline{mdim}_M}\,\Big(X,f,d\Big)= 0$ whenever the topological entropy of $f$ is finite. In particular, the metric mean dimension is a suitable quantity to study systems with infinite topological entropy. For more on these quantities see \cite{LW,LT,TSU2020} and references therein.  

\subsection{The metric mean dimension for non-compact subset}  We now present the notion of metric mean dimension on non-compact sets introduced in \cite{CHeng}.
Given a set $Z\subset X$, let us consider
$$
m(Z,s, N,\varepsilon)=\inf_{\Gamma}\left\{\sum_{i\in I}\exp{\left(-sn_i\right)}\right\},
$$
where the infimum is taken over all covers $\Gamma=\{B_{n_i}(x_i,\varepsilon)\}_{i\in I}$ of $Z$ with $n_i\geq N$. We also consider
$$
m(Z,s,\varepsilon)=\lim_{N\to\infty}m(Z,s, N,\varepsilon).
$$
One can show (see for instance \cite{Pesin}) that there exists a certain number $s_0\in [0,+\infty)$ such that $m(Z,s,\varepsilon)=0$ for every $s>s_0$ and $m(Z,s,\varepsilon)=+\infty$ for every $s<s_0$. In particular, we may consider
$$
h\,\Big(Z,f,\varepsilon\Big)=\inf\{s:m(Z,s,\varepsilon)=0\}=\sup\{s:m(Z,s,\varepsilon)=+\infty\}.
$$
The \textit{upper metric mean dimension of f on $Z$} is then defined as the following limit
\begin{equation*}
    \overline{\text{mdim}}_M\,\Big(Z,f,d\Big)=\limsup_{\varepsilon\to 0}\frac{h\,\Big(Z,f,\varepsilon\Big)}{|\log \varepsilon|}.
\end{equation*}
Similarly, the \textit{lower metric mean dimension of f on $Z$} is defined as
\begin{equation*}
    \underline{\text{mdim}}_M\,\Big(Z,f,d\Big)=\liminf_{\varepsilon\to 0}\frac{h\,\Big(Z,f,\varepsilon\Big)}{|\log \varepsilon|}.
\end{equation*}
In the case when $Z=X$ one can check that the two definitions of upper/lower metric mean dimension given above actually coincide.

\subsection{Level sets of a continuous map} \label{sec: level sets}
Let $C(X,\mathbb{R})$ denote the set of all continuous maps $\varphi:X\to\mathbb R$ and take $\varphi \in C(X,\mathbb{R})$. For $\alpha\in\mathbb R$, let
 \begin{align}\label{def-target-set}
     K_\alpha=\left\{x\in X: \lim_{n\to\infty}\frac{1}{n}\sum_{j=0}^{n-1}\varphi(f^{j}(x))=\alpha\right\}.
 \end{align}
We also consider the set 
\begin{displaymath}
\mathcal L_\varphi=\{\alpha\in\mathbb R: K_\alpha\not=\emptyset\}.
\end{displaymath}
It is easy to see that $\mathcal L_\varphi $ is a bounded and non-empty set \cite[Lemma 2.1]{TV}. Moreover, if $f$ satisfies the so called specification property (see Section \ref{sec: specification}) then $\mathcal L_\varphi$ is an interval of $\mathbb R$ and, moreover, $\mathcal L_\varphi=\{\int \varphi d\mu ; \mu \in \mathcal{M}_f(X)\}$ where $\mathcal{M}_f(X)$ stands for the set of all invariant measures (see \cite[Lemma 2.5]{Tho}).

\subsection{The auxiliary quantities $\Lambda_\varphi\mathrm{\overline{mdim}_M}\,(f,\alpha,d)$ and $\Lambda_\varphi\mathrm{\underline{mdim}_M}\,(f,\alpha,d)$}
Fix  $\alpha\in\mathbb R$ and $\varphi\in C(X,\mathbb{R})$. For $\delta>0$ and $n\in\mathbb N$ define the set
\begin{align*}
    P(\alpha,\delta,n)=\left\{x\in X: \left|\frac{1}{n}\sum_{j=0}^{n-1}\varphi(f^{j}(x))-\alpha\right|<\delta\right\}.
\end{align*}
Let $N(\alpha,\delta,n,\varepsilon)$ denote the minimal number of $(n,\varepsilon)$-balls needed to cover $P(\alpha,\delta,n)$. Define 
\begin{align*}
    \Lambda_{\varphi}(\alpha,\varepsilon)=\lim_{\delta\to0}\liminf_{n\to\infty}\frac{1}{n}\log  N(\alpha,\delta,n,\varepsilon)
\end{align*}
and 

\begin{align}\label{def_Lamb}
\begin{split}
    \Lambda_\varphi\mathrm{\overline{mdim}_M}\,(f,\alpha,d)&=\limsup_{\varepsilon\to0}\frac{\Lambda_{\varphi}(\alpha,\varepsilon)}{|\log\varepsilon|},\\
    \Lambda_\varphi\mathrm{\underline{mdim}_M}\,(f,\alpha,d)&=\liminf_{\varepsilon\to0}\frac{\Lambda_{\varphi}(\alpha,\varepsilon)}{|\log\varepsilon|}.
\end{split}    
\end{align}

\begin{remark} \label{remark: sep x span}
Observe that, if $M(\alpha,\delta,n,\varepsilon)$ denotes the maximal cardinality of a $(n,\varepsilon)$-separated set contained in $P(\alpha,\delta,n)$, then we have that 
$$
N(\alpha,\delta,n,\varepsilon)\leq M(\alpha,\delta,n,\varepsilon)\leq N(\alpha,\delta,n,\varepsilon/2).
$$
In particular,
\begin{align}\label{eq:ineq-sep-cov}
    \Lambda_{\varphi}(\alpha,\varepsilon)=\lim_{\delta\to0}\liminf_{n\to\infty}\frac{1}{n}\log  M(\alpha,\delta,n,\varepsilon).
\end{align}
\end{remark}

\subsection{The main quantities $\mathrm{H_\varphi\overline{mdim}_M}(f,\alpha,d)$ and $\mathrm{H_\varphi\underline{mdim}_M}(f,\alpha,d)$} \label{sec: def H varphi}

Given $\varphi\in C(X,\mathbb R)$ and $\alpha \in\mathbb R$, let us consider
 $$\mathcal M_{f}(X,\varphi,\alpha)=\left\{\mu\text{ is $f$-invariant and }\int \varphi\;d\mu=\alpha\right\}.$$
A simple observation is that $\mathcal M_{f}(X,\varphi,\alpha)\neq \emptyset$ for every $\alpha\in \mathcal{L}_\varphi$ (see \cite[Lemma 4.1]{TV}).

Let $\mu\in\mathcal M_f(X)$. We say that $\xi=\{C_1,\ldots,C_k\}$ is a measurable partition of $X$ if every $C_i$ is a measurable set, $\mu\left(X\setminus\cup_{i=1}^kC_i\right)=0$ and $\mu\left(C_i\cap C_j\right)=0$ for every $i\neq j$. The \emph{entropy} of $\xi$ with respect to $\mu$ is given by
\[
H_\mu(\xi)=-\sum_{i=1}^{k}\mu(C_i)\log(\mu(C_i)).
\]
Given a measurable partition $\xi$, we consider $\xi^n=\bigvee_{j=0}^{n-1}f^{-j}\mathcal \xi$. Then, the \emph{metric entropy of $(f,\mu)$ with respect to $\xi$} is given by
\[
h_\mu(f,\xi)=\lim_{n\to +\infty}\frac{1}{n} H_\mu(\xi^n).
\] 
Using this quantity we define
\begin{align}\label{def_Hmdim} 
   \mathrm{H_\varphi\overline{mdim}_M}\,(f,\alpha,d)=\limsup_{\varepsilon\to0}\frac{1}{|\log\varepsilon|}\sup_{\mu\in \mathcal M_{f}(X,\varphi,\alpha)}\inf_{|\xi|<\varepsilon}h_\mu(f,\xi)
\end{align}
and
\begin{align*} 
   \mathrm{H_\varphi\underline{mdim}_M}\,(f,\alpha,d)=\liminf_{\varepsilon\to0}\frac{1}{|\log\varepsilon|}\sup_{\mu\in \mathcal M_{f}(X,\varphi,\alpha)}\inf_{|\xi|<\varepsilon}h_\mu(f,\xi)
\end{align*}
where $|\xi|$ denotes the diameter of the partition $\xi$ and the infimum is taken over all finite measurable partitions of $X$ satisfying $|\xi|<\varepsilon$. 

We also recall that the \emph{metric entropy of $(f,\mu)$} is given by
\[
h_\mu(f)=\sup_\xi h_\mu(f,\xi)
\]
where the supremum is taken over all finite measurable partitions $\xi$ of $X$.

\subsection{Specification property} \label{sec: specification}
We say that $f$ satisfies the \emph{specification property} if for every $\epsilon > 0$, there exists an integer $m = m(\epsilon )$ such that for any collection of finite intervals $ I_j = [a_j, b_j ] \subset \mathbb{N}$, $j = 1, \ldots, k $, satisfying $a_{j+1} - b_j \geq m(\epsilon )$ for every $j = 1, \ldots, k-1 $ and any $x_1, \ldots, x_k$ in $X$, there exists a point $x \in X$ such that
\begin{equation*}
d(f^{p + a_j}x, f^p x_j) < \epsilon \mbox{ for all } p = 0, \ldots, b_j - a_j \mbox{ and every } j = 1, \ldots, k.
\end{equation*}
The specification property is present in many interesting examples. For instance, every topologically mixing locally maximal hyperbolic set has the specification property and factors of systems with specification have specification (see for instance \cite{KH}). Other examples of systems satisfying this property which are more adapted to our purposes will appear in Section \ref{sec: examples}.

\subsection{Main result} Our main result may be seen as an extension of \cite[Theorem 5.1]{TV} to the infinite entropy setting. 
\begin{maintheorem}\label{thm1}
Suppose $f:X \to X$ is a continuous transformation with the specification
property. Let $\varphi\in C(X,\mathbb R)$ and $\alpha\in\mathbb R$ be such that $K_\alpha\not=\emptyset$.
Then
$$
\mathrm{\overline{mdim}_M}\,\Big(K_\alpha,f, d\Big)=\Lambda_\varphi\mathrm{\overline{mdim}_M}\,(f,\alpha,d)= \mathrm H_\varphi\mathrm{\overline{mdim}_M}\,(f,\alpha,d).
$$
Similarly,
$$
\mathrm{\underline{mdim}_M}\,\Big(K_\alpha,f, d\Big)=\Lambda_\varphi\mathrm{\underline{mdim}_M}\,(f,\alpha,d)= \mathrm H_\varphi\mathrm{\underline{mdim}_M}\,(f,\alpha,d).
$$
\end{maintheorem}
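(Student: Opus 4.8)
The plan is to establish the cyclic chain
\[
\overline{\mathrm{mdim}}_M(K_\alpha,f,d)\ \le\ \Lambda_\varphi\overline{\mathrm{mdim}}_M(f,\alpha,d)\ \le\ \mathrm{H}_\varphi\overline{\mathrm{mdim}}_M(f,\alpha,d)\ \le\ \overline{\mathrm{mdim}}_M(K_\alpha,f,d),
\]
only the last inequality using the specification property; this is the metric mean dimension analogue of the scheme of \cite{TV}. For the first inequality fix $\varepsilon>0$ and $s>\Lambda_\varphi(\alpha,\varepsilon)$, so that there is $\delta>0$ with $N(\alpha,\delta,n,\varepsilon)<e^{sn}$ for infinitely many $n$. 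Writing $K_\alpha=\bigcup_{m\ge1}K_\alpha^{(m)}$, where $K_\alpha^{(m)}=\{x:\,|\tfrac1n\sum_{j=0}^{n-1}\varphi(f^jx)-\alpha|<\delta\text{ for all }n\ge m\}\subseteq P(\alpha,\delta,n)$ whenever $n\ge m$, for each $m$ and each $N\ge m$ one picks such an $n\ge N$ and covers $P(\alpha,\delta,n)$ (hence $K_\alpha^{(m)}$) by the corresponding dynamical $\varepsilon$-balls; the resulting cover has $\sum_i\exp(-sn_i)<1$, so $m(K_\alpha^{(m)},s,\varepsilon)\le1$ and $h(K_\alpha^{(m)},f,\varepsilon)\le s$. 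Countable stability of $h(\cdot,f,\varepsilon)$ then gives $h(K_\alpha,f,\varepsilon)\le s$; letting $s\downarrow\Lambda_\varphi(\alpha,\varepsilon)$, dividing by $|\log\varepsilon|$ and taking $\limsup_{\varepsilon\to0}$ yields the bound, with no use of specification.

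For the second inequality fix $\varepsilon,\delta>0$ and let $E_n$ be a maximal $(n,\varepsilon)$-separated subset of $P(\alpha,\delta,n)$, so $|E_n|=M(\alpha,\delta,n,\varepsilon)$ by Remark \ref{remark: sep x span}. The key observation is that if $|\xi|<\varepsilon$ then no atom of $\xi^n$ contains two points of $E_n$, whence $\log|E_n|=H_{\sigma_n}(\xi^n)$ for the uniform measure $\sigma_n$ on $E_n$. Put $\mu_n=\tfrac1n\sum_{j=0}^{n-1}f^j_*\sigma_n$ and pass to a subsequence along which $\tfrac1n\log|E_n|\to\liminf_n\tfrac1n\log M(\alpha,\delta,n,\varepsilon)$ and $\mu_n$ converges to some $\mu\in\mathcal M_f(X)$; since $|\int\varphi\,d\mu_n-\alpha|<\delta$ we get $|\int\varphi\,d\mu-\alpha|\le\delta$. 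Applying the Misiurewicz estimate to $\xi^n$ together with the identity above yields $\liminf_n\tfrac1n\log M(\alpha,\delta,n,\varepsilon)\le h_\mu(f,\xi)$ for every $\xi$ with $|\xi|<\varepsilon$ and $\mu(\partial\xi)=0$; approximating an arbitrary partition of diameter $<\varepsilon$, in the Rokhlin metric and without enlarging its diameter, by one with $\mu$-null boundary, we obtain $\liminf_n\tfrac1n\log M(\alpha,\delta,n,\varepsilon)\le\inf_{|\xi|<\varepsilon}h_\mu(f,\xi)\le\sup_{|\int\varphi\,d\nu-\alpha|\le\delta}\inf_{|\xi|<\varepsilon}h_\nu(f,\xi)$. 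Since $\nu\mapsto\inf_{|\xi|<\varepsilon}h_\nu(f,\xi)$ is upper semicontinuous and the constraint sets decrease to $\mathcal M_f(X,\varphi,\alpha)$, letting $\delta\to0$ gives $\Lambda_\varphi(\alpha,\varepsilon)\le\sup_{\mu\in\mathcal M_f(X,\varphi,\alpha)}\inf_{|\xi|<\varepsilon}h_\mu(f,\xi)$; dividing by $|\log\varepsilon|$ and taking $\limsup_{\varepsilon\to0}$ completes the step, again without specification.

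The third inequality is the heart of the matter. For a small scale $\varepsilon>0$ fix $\mu=\mu_\varepsilon\in\mathcal M_f(X,\varphi,\alpha)$ almost realizing $\sup_\mu\inf_{|\xi|<\varepsilon}h_\mu(f,\xi)$. One first reformulates $\inf_{|\xi|<\varepsilon}h_\mu(f,\xi)$, through a Katok-type formula (in the spirit of \cite{LT,GS}), as the exponential growth rate of the number of $(n,\varepsilon')$-separated ``$\mu$-typical'' orbit segments with $\varepsilon'$ comparable with $\varepsilon$; on ergodic pieces the Birkhoff theorem moreover makes the $\varphi$-Birkhoff averages of these segments tend to $\alpha$. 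Then, imitating the dynamical Moran-set constructions of \cite{TV} (and of Pfister--Sullivan), one concatenates such segments along a rapidly growing sequence of block lengths, gluing consecutive blocks by orbit segments of length $m(\cdot)$ furnished by the specification property, so as to produce a Cantor-like set $Y\subseteq K_\alpha$ carrying a Moran measure whose local behaviour, via a mass-distribution estimate, forces $h(Y,f,c\varepsilon)\ge\inf_{|\xi|<\varepsilon}h_\mu(f,\xi)$ up to an additive error $o(|\log\varepsilon|)$. As $Y\subseteq K_\alpha$, dividing by $|\log\varepsilon|$, using $|\log(c\varepsilon)|/|\log\varepsilon|\to1$ and taking $\limsup_{\varepsilon\to0}$ gives $\mathrm{H}_\varphi\overline{\mathrm{mdim}}_M(f,\alpha,d)\le\overline{\mathrm{mdim}}_M(K_\alpha,f,d)$.

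The main obstacle is precisely this last inequality, where three points require care: (i) the scale-comparable Katok-type identity for $\inf_{|\xi|<\varepsilon}h_\mu(f,\xi)$, since the passage between entropies of fine partitions and Bowen-ball covering/separation numbers a priori introduces a loss that must be controlled to be $o(|\log\varepsilon|)$, hence invisible after normalization; (ii) the fact that the measure realizing the supremum defining $\mathrm{H}_\varphi\overline{\mathrm{mdim}}_M$ need not be ergodic (the functional $\mu\mapsto\inf_{|\xi|<\varepsilon}h_\mu(f,\xi)$ is concave but not affine), which is handled via the ergodic decomposition together with the concavity in $\alpha$ of the quantities at play — itself a consequence of specification; and (iii) arranging the Moran construction so that the gap segments and all approximation errors stay negligible relative to $|\log\varepsilon|$. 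The remaining bookkeeping — the choice of block lengths, the verification that $Y\subseteq K_\alpha$, and the mass-distribution estimate — is routine once these ingredients are in place.
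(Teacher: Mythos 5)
Your overall scheme is the same as the paper's: the cyclic chain
$\mathrm{\overline{mdim}_M}(K_\alpha,f,d)\le \Lambda_\varphi\mathrm{\overline{mdim}_M}(f,\alpha,d)\le \mathrm{H_\varphi\overline{mdim}_M}(f,\alpha,d)\le \mathrm{\overline{mdim}_M}(K_\alpha,f,d)$, with specification used only in the last link. Your first inequality is exactly the paper's Proposition \ref{lemma:20} (your $K_\alpha^{(m)}$ are the sets $G(\alpha,\delta,k)$), and your second is essentially Proposition \ref{lemma:22} (empirical measures on maximal separated subsets of $P(\alpha,\delta,n)$ plus the Misiurewicz estimate). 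One caveat in the second step: you pass to the limit $\delta\to0$ by invoking upper semicontinuity of $\nu\mapsto\inf_{|\xi|<\varepsilon}h_\nu(f,\xi)$, which you do not prove (it requires approximating an arbitrary partition of diameter $<\varepsilon$ by one with $\nu_0$-null boundary without increasing the diameter). The paper sidesteps this entirely by a diagonal argument: it lets $\delta_{j,k}\to0$ jointly with $n_{j,k}\to\infty$, so the limiting measure $\mu^{(j)}$ satisfies $\int\varphi\,d\mu^{(j)}=\alpha$ exactly and no semicontinuity is needed. Either route works, but yours as written has an unproved step.

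The third inequality is where your proposal stops being a proof and becomes a plan. The ingredients you list are indeed the ones the paper uses — a Katok-type identity at comparable scales (supplied by Shi's results, giving $h_{\nu_i^k}(f,5\varepsilon,\gamma)\le h_{\nu_i^k}(f,\xi_k)\le h_{\nu_i^k}(f,5\varepsilon/4,\gamma)+\delta_k$, so the scale loss is a bounded multiple of $\varepsilon$ and disappears after dividing by $|\log\varepsilon|$), a Moran-type construction glued by specification, and a mass-distribution/entropy-distribution principle. But your description of how non-ergodicity of the optimizing measure is handled is off the mark: the paper does not use ``concavity in $\alpha$''. It approximates $\mu$ by a finite convex combination $\nu_k=\sum_i\lambda_i\nu_i^k$ of ergodic measures via the ergodic decomposition and the Abramov-type formula $\inf_{\xi\succ\mathcal U}h_\mu(f,\xi)=\int\inf_{\xi\succ\mathcal U}h_\tau(f,\xi)\,d\hat\mu(\tau)$ of \cite{HMRY}. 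The crucial point your sketch glosses over is that the individual ergodic components satisfy $\int\varphi\,d\nu_i^k\neq\alpha$ in general, so one cannot run the construction for a single ergodic measure; instead, specification is used to concatenate, within each block, orbit segments typical for $\nu_1^k,\dots,\nu_{j(k)}^k$ with lengths in the proportions $\lambda_1,\dots,\lambda_{j(k)}$, which is what forces the Birkhoff averages of the resulting points to converge to $\sum_i\lambda_i\int\varphi\,d\nu_i^k\approx\alpha$ and hence $F\subset K_\alpha$. Without this proportional-gluing mechanism (and the accompanying counting estimate \eqref{eq:200} and Entropy Distribution Principle), the inclusion $Y\subseteq K_\alpha$ and the lower bound on $h(Y,f,\varepsilon/2)$ do not follow, so this step remains a genuine gap in your write-up even though the intended route coincides with the paper's.
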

We consider the equalities between $\mathrm{\overline{mdim}_M}\,\Big(K_\alpha,f, d\Big)$ and $\mathrm H_\varphi\mathrm{\overline{mdim}_M}\,(f,\alpha,d)$ and between $\mathrm{\underline{mdim}_M}\,\Big(K_\alpha,f, d\Big)$ and $\mathrm H_\varphi\mathrm{\underline{mdim}_M}\,(f,\alpha,d)$ to be the most important part of our result because it relates a topological quantity with one that has an ergodic-theoretical flavor. Moreover, in some cases it allow us to obtain some interesting properties about the multifractal spectrum. For instance, will show bellow that

\begin{proposition}\label{prop: H is concave}
Under the assumptions of Theorem \ref{thm1}, the map 
\[\mathcal{L}_\varphi\ni \alpha\mapsto \mathrm {H_\varphi\underline{mdim}_M}\,(f,\alpha,d)\]
is concave.
\end{proposition}

Consequently, combining this result with Theorem \ref{thm1} we get that

\begin{corollary}
Under the assumptions of Theorem \ref{thm1}, the map 
\[\mathcal{L}_\varphi\ni \alpha \mapsto \mathrm{\underline{mdim}_M}\,\Big(K_\alpha,f, d\Big)\]
is concave.
\end{corollary}

An interesting question is whether we can change the order between the limit and the supremum in the definition of $\mathrm H_\varphi\mathrm{\overline{mdim}_M}\,(f,\alpha,d)$ and $\mathrm H_\varphi\mathrm{\underline{mdim}_M}\,(f,\alpha,d)$. This would allow, for instance, to talk about the existence of ``maximizing measures": measures that realize the supremum. Such a measure would capture the complexity of the system over all scales $\varepsilon >0$.  
It was observed in \cite[Section VIII]{LT} that a similar question involving different ergodic quantities is, in general, false. Nevertheless, under the additional assumption that $f$ has the marker property, one can do such a change (in the setting of \cite{LT}) as observed by Yang, Chen and Zhou \cite{YCZ}. As for our hypothesis that $f$ satisfies the specification property, we do not know whether it is actually required for Theorem \ref{thm1} to hold or if it is just an artefact of the technique. 

\subsection{Related results}\label{sec: related results}

As already mentioned, for the topological entropy a result similar to Theorem \ref{thm1} was obtained in \cite{TV}. In fact, our result was inspired by that one. Moreover, \cite{TV} was extended to the framework of topological pressure in \cite{Tho}. 

As for variational results involving the upper metric mean dimension, there are several works dealing with this problem. For instance, \cite{LT} presented a variational principle relating the metric mean dimension with the supremum of certain rate distortion functions over invariant measures of the system. This was further explored in \cite{VV}. More recently, \cite{Shi} obtained variational principles for the metric mean dimension in terms of Brin-Katok local entropy and Shapira's
entropy of an open cover. One result that is more connected to ours is the one obtained in \cite{GS} which says that
\begin{align}\label{vari_Hmdim} 
 \mathrm{\overline{mdim}_M}\,(X,f,d)=\limsup_{\varepsilon\to0}\frac{1}{|\log\varepsilon|}\sup_{\mu\in \mathcal M_{f}(X)}\inf_{|\xi|<\varepsilon}h_\mu(f,\xi)
\end{align}
and
\begin{align} \label{vari_Hmdim2}
 \mathrm{\underline{mdim}_M}\,(X,f,d)=\liminf_{\varepsilon\to0}\frac{1}{|\log\varepsilon|}\sup_{\mu\in \mathcal M_{f}(X)}\inf_{|\xi|<\varepsilon}h_\mu(f,\xi).
\end{align}
These are variational results for the upper/lower metric mean dimension of the entire space $X$ while Theorem \ref{thm1} applies also to level sets of continuous maps $\varphi$. Observe that in the case when $\varphi$ is a constant map equal to $\alpha$, the $\alpha$-level set of it coincides with $X$. In particular, whenever $f$ has the specification property, \eqref{vari_Hmdim} and \eqref{vari_Hmdim2} may be seen as particular cases of our result. We stress however that the results in \cite{GS} do not assume such property.


\section{Proofs of Theorem \ref{thm1} and Proposition \ref{prop: H is concave}} \label{sec: proofs}
In this section we present the proofs of Theorem \ref{thm1} and Proposition \ref{prop: H is concave} starting with the latter one which is much simpler.

\begin{proof}[Proof of Proposition \ref{prop: H is concave}]
Given measures $\mu_1,\mu_2\in \mathcal{M}_f(X)$, using that the map $\mu\to H_\mu(\xi)$ is concave for any finite and measurable partition $\xi$ (see \cite[Lemma 9.5.1]{BS}), it follows that for any $t\in[0,1]$,
\[ th_{\mu_1}(f,\xi)+(1-t)h_{\mu_2}(f,\xi)\leq h_{t\mu_1+(1-t)\mu_2}(f,\xi). \]
In particular,
\begin{equation}\label{eq: auxil1 concave}
t\inf_{|\xi|<\varepsilon} h_{\mu_1}(f,\xi)+(1-t)\inf_{|\xi|<\varepsilon}h_{\mu_2}(f,\xi)\leq \inf_{|\xi|<\varepsilon}h_{t\mu_1+(1-t)\mu_2}(f,\xi).
\end{equation}

Now, given $\alpha_1,\alpha_2 \in \mathcal{L}_\varphi$, by the comments in Section \ref{sec: level sets} there exist invariant measures $\mu_1,\mu_2\in \mathcal{M}_f(X)$ such $\alpha_i=\int \varphi d\mu_i$, $i=1,2$. For any $t\in[0,1]$, consider $\mu=t\mu_1+(1-t)\mu_2$ and $\alpha=t\alpha_1+(1-t)\alpha_2$. Then, $\mu\in \mathcal M_{f}(X,\varphi,\alpha)$. Combining this observation with \eqref{eq: auxil1 concave} we get that
\[
\begin{split}
&t  \sup_{\mu_1\in \mathcal M_{f}(X,\varphi,\alpha_1)} \inf_{|\xi|<\varepsilon} h_{\mu_1}(f,\xi)+(1-t)\sup_{\mu_2\in \mathcal M_{f}(X,\varphi,\alpha_2)}\inf_{|\xi|<\varepsilon}h_{\mu_2}(f,\xi)\\
&\leq \sup_{\mu_1\in \mathcal M_{f}(X,\varphi,\alpha_1),\mu_2\in \mathcal M_{f}(X,\varphi,\alpha_2)}\inf_{|\xi|<\varepsilon}h_{t\mu_1+(1-t)\mu_2}(f,\xi)\\
&\leq \sup_{\mu\in \mathcal M_{f}(X,\varphi,\alpha)}\inf_{|\xi|<\varepsilon}h_{\mu}(f,\xi).
\end{split}
\]
Then, dividing everything by $|\log \varepsilon|$, taking ``$\liminf_{\varepsilon\to 0}$'' and using that $\liminf(a) + \liminf(b)\leq \liminf (a+b)$ we get that
\[ t \mathrm H_\varphi\mathrm{\underline{mdim}_M}\,(f,\alpha_1,d)+(1-t) \mathrm H_\varphi\mathrm{\underline{mdim}_M}\,(f,\alpha_2,d)\leq  \mathrm H_\varphi\mathrm{\underline{mdim}_M}\,(f,t\alpha_1+(1-t)\alpha_2,d).\]
Consequently, the map $\mathcal{L}_\alpha\ni \alpha\mapsto  \mathrm H_\varphi\mathrm{\underline{mdim}_M}\,(f,\alpha,d)$ is concave concluding the proof of the proposition.
\end{proof}

\begin{remark}
It is not clear to us whether a version of Proposition \ref{prop: H is concave} holds for the map $\mathcal{L}_\alpha\ni \alpha\mapsto  \mathrm H_\varphi\mathrm{\overline{mdim}_M}\,(f,\alpha,d)$. In fact, in order to get the desired conclusion in the aforementioned proposition we have used the property that
   $\liminf(a) + \liminf(b)\leq \liminf (a+b)$ which obviously does not hold for the $\limsup$.
\end{remark}

We now present the the poof of Theorem \ref{thm1}. We start considering the first claim of the theorem and, for the sake of clarity of the presentation, we split it into three main propositions. We emphasize that this proof is an adaptation of the proof of Theorem 5.1 of \cite{TV} to our setting. Fix $\varphi\in C(X,\mathbb R)$ and $\alpha\in\mathbb R$ such that $K_\alpha\not=\emptyset$. Moreover, assume initially that all the quantities $\mathrm{\overline{mdim}_M}\,\Big(K_\alpha,f, d\Big)$, $\Lambda_\varphi\mathrm{\overline{mdim}_M}\,(f,\alpha,d)$ and $\mathrm H_\varphi\mathrm{\overline{mdim}_M}\,(f,\alpha,d)$ are finite.

\begin{proposition}\label{lemma:20}
Under the hypotheses of Theorem \ref{thm1} we have that
\[
\mathrm{\overline{mdim}_M}\,(K_\alpha,f,d)\leq \Lambda_\varphi\mathrm{\overline{mdim}_M}\,(f,\alpha,d).
\]
\end{proposition}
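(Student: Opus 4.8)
The plan is to bound the metric mean dimension of $K_\alpha$ from above using the fact that $K_\alpha$ is contained, up to a suitable approximation, in the sets $P(\alpha,\delta,n)$, so that covers of these sets by $(n,\varepsilon)$-balls yield admissible covers in the Carath\'eodory construction defining $h(K_\alpha,f,\varepsilon)$. Concretely, fix $\varepsilon>0$ and $\delta>0$. For any point $x\in K_\alpha$, by definition of the Birkhoff limit there exists $N_x$ such that $x\in P(\alpha,\delta,n)$ for all $n\ge N_x$. Hence, writing $K_\alpha=\bigcup_{N\ge 1}\{x\in K_\alpha : N_x\le N\}$, each tail piece is covered by $P(\alpha,\delta,n)$ for every $n\ge N$. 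The first step is therefore to show that $h(K_\alpha,f,\varepsilon)\le \liminf_{n\to\infty}\frac1n\log N(\alpha,\delta,n,\varepsilon)$ for every $\delta>0$; taking $\delta\to 0$ then gives $h(K_\alpha,f,\varepsilon)\le \Lambda_\varphi(\alpha,\varepsilon)$, and dividing by $|\log\varepsilon|$ and taking $\limsup_{\varepsilon\to0}$ finishes the proof.

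For the key inequality $h(K_\alpha,f,\varepsilon)\le \liminf_n \frac1n\log N(\alpha,\delta,n,\varepsilon)$, I would argue as follows. Set $s=\liminf_n \frac1n\log N(\alpha,\delta,n,\varepsilon)$ and pick any $s'>s$; it suffices to show $m(K_\alpha,s',\varepsilon)=0$, which by definition of $h(K_\alpha,f,\varepsilon)$ gives $h(K_\alpha,f,\varepsilon)\le s'$, and then let $s'\downarrow s$. By choice of $s$ there is a sequence $n_\ell\to\infty$ with $N(\alpha,\delta,n_\ell,\varepsilon)\le e^{s' n_\ell}$ for all large $\ell$. For each such $n_\ell$, take a minimal cover $\Gamma_\ell$ of $P(\alpha,\delta,n_\ell)$ by $(n_\ell,\varepsilon)$-balls, of cardinality at most $e^{s' n_\ell}$. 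Given $N$, discard the finitely many $n_\ell<N$; for the remaining ones, the tail $\{x\in K_\alpha : N_x\le n_\ell\}\subset P(\alpha,\delta,n_\ell)$ is covered by $\Gamma_\ell$, and the contribution to $m(K_\alpha,s',N,\varepsilon)$ from this cover is at most $N(\alpha,\delta,n_\ell,\varepsilon)\,e^{-s' n_\ell}\le 1$, which is \emph{not} summable as $\ell$ varies. To get a genuinely small value one instead covers all of $K_\alpha$ by exploiting that the tails exhaust $K_\alpha$: cover $\{x : N_x=1\}$ by $\Gamma_{\ell_1}$, then $\{x: N_x\le m\}\setminus(\text{already covered})\subset P(\alpha,\delta,n_{\ell_m})$ by $\Gamma_{\ell_m}$ for a rapidly increasing sequence $\ell_1<\ell_2<\cdots$ chosen so that $\sum_m N(\alpha,\delta,n_{\ell_m},\varepsilon)\,e^{-s' n_{\ell_m}}$ is as small as we please — this is possible because each summand is $\le 1$ and, by replacing $s'$ by a slightly smaller value still exceeding $s$ along the subsequence, each summand is in fact $\le e^{-(s'-s'')n_{\ell_m}}\to 0$ geometrically fast. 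A cleaner route, which is what I would actually write, is the standard one: fix $s'>s''>s$; along the subsequence $n_\ell$ we have $N(\alpha,\delta,n_\ell,\varepsilon)e^{-s' n_\ell}\le e^{-(s'-s'')n_\ell}$, so for any prescribed $\eta>0$ we can choose a single $n_\ell\ge N$ with this quantity $<\eta$, and since that single cover $\Gamma_\ell$ covers the tail $\{x\in K_\alpha:N_x\le n_\ell\}$, iterating over an increasing sequence of such indices produces a cover of all of $K_\alpha$ with total weight $<\sum_m \eta 2^{-m}=\eta$. Letting $N\to\infty$ gives $m(K_\alpha,s',\varepsilon)=0$.

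Having established $h(K_\alpha,f,\varepsilon)\le \Lambda_\varphi(\alpha,\varepsilon)$ for every $\varepsilon>0$, the conclusion is immediate: divide by $|\log\varepsilon|$, and take $\limsup_{\varepsilon\to 0}$ on both sides, using the very definitions \eqref{def_Lamb} and the definition of $\overline{\text{mdim}}_M(K_\alpha,f,d)$, to obtain $\mathrm{\overline{mdim}_M}\,(K_\alpha,f,d)\le \Lambda_\varphi\mathrm{\overline{mdim}_M}\,(f,\alpha,d)$.

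The main obstacle is the bookkeeping in the second step: one must be careful that $K_\alpha$ is typically not compact, so one cannot extract a finite subcover, and the Carath\'eodory-dimension machinery requires covers by dynamical balls of \emph{arbitrarily large} length $n\ge N$ — which is exactly why the definition of $\Lambda_\varphi$ uses $\liminf_n$, giving us a subsequence of good scales $n_\ell\to\infty$ along which $N(\alpha,\delta,n_\ell,\varepsilon)$ is sub-exponentially controlled. Handling the $\delta\to0$ limit is painless because $\Lambda_\varphi(\alpha,\varepsilon)$ is defined as that limit and the bound holds for each fixed $\delta$; no specification property is needed for this direction, only the elementary inclusion of Birkhoff-tails into the sets $P(\alpha,\delta,n)$.
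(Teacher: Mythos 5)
Your argument is correct and follows essentially the same route as the paper: both exploit that the Birkhoff tails (the paper's $G(\alpha,\delta,k)=\bigcap_{n\ge k}P(\alpha,\delta,n)$) sit inside $P(\alpha,\delta,n)$ for every $n\ge k$, and both use the $\liminf$ in the definition of $\Lambda_\varphi(\alpha,\varepsilon)$ to extract scales $n_\ell$ at which the covering numbers are bounded by $e^{s''n_\ell}$ with $s''<s'$, forcing the Carath\'eodory quantity $m(\cdot,s',\varepsilon)$ to vanish. The only (harmless) difference is that the paper bounds $h(G(\alpha,\delta,k),f,\varepsilon)$ for each fixed $k$ using a single cover at one good scale and then invokes $h(K_\alpha,f,\varepsilon)\le\sup_k h(G(\alpha,\delta,k),f,\varepsilon)$, whereas you assemble one explicit countable cover of all of $K_\alpha$ from the tail covers and sum a geometric series directly; your version also yields the clean inequality $h(K_\alpha,f,\varepsilon)\le\Lambda_\varphi(\alpha,\varepsilon)$ for every $\varepsilon$ before dividing by $|\log\varepsilon|$, which lets you skip the paper's final bookkeeping with the choice of $s(\varepsilon_j)$ to make $\gamma(\varepsilon_j)/|\log\varepsilon_j|$ negligible.
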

\begin{proof}

Let  $\{\varepsilon_j\}_{j\in\mathbb N}$ be a sequence of positive numbers converging to zero such that
\[
\mathrm{\overline{mdim}_M}\,(K_\alpha,f,d)=\lim_{j\to\infty}\frac{h(K_\alpha,f,\varepsilon_j)}{|\log \varepsilon_j|}.
\]
In particular we have that 
$$
\limsup_{j\to\infty}\frac{\Lambda_{\varphi}(\alpha,\varepsilon_j)}{|\log\varepsilon_j|}\leq\limsup_{\varepsilon\to0}\frac{\Lambda_{\varphi}(\alpha,\varepsilon)}{|\log\varepsilon|}= \Lambda_\varphi\mathrm{\overline{mdim}_M}\,(f,\alpha,d).
$$

Given $\delta>0$ and $k\in\mathbb N$, let us consider the set 
\begin{align*}
  G(\alpha,\delta,k)&=\bigcap_{n=k}^\infty P(\alpha,\delta, n)\\
                    &=\bigcap_{n=k}^\infty\left\{x\in X:\left|\frac{1}{n}\sum_{j=0}^{n-1}\varphi(f^{j}(x))-\alpha\right|<\delta \right\}.
\end{align*}
As a consequence of the definition we have that $K_\alpha\subset\bigcup_{k\in\mathbb N} G(\alpha,\delta,k)$.

Now, given $k\in \mathbb{N}$, since $G(\alpha,\delta,k)\subset P(\alpha,\delta,n)$ for $n\geq k$, it follows that $G(\alpha,\delta,k)$ may be covered by $N(\alpha,\delta,n,\varepsilon_j)$ dynamical balls of radius $\varepsilon_j$ and length $n$. Thus, for every $s\geq0$  and $n\geq k$ we have
\[
m(G(\alpha,\delta,k),s,\varepsilon_j)\leq N(\alpha,\delta,n,\varepsilon_j)\exp(-ns).
\]

Let $s=s(\varepsilon_j)>\Lambda_\varphi(\alpha,\varepsilon_j)$ and $\gamma(\varepsilon_j)=(s-\Lambda_\varphi(\alpha,\varepsilon_j))\slash2$. Then, if $\delta_j>0$ is small enough, there exists an increasing sequence
$\{n_\ell\}_{\ell\in\mathbb N}\subset \mathbb N$ such that
\[
N(\alpha,\delta_j,n_\ell, \varepsilon_j)\leq \exp(n_\ell(\Lambda_\varphi(\alpha,\varepsilon_j)+\gamma(\varepsilon_j))).
\] 
Thus, assuming without lost of generality that $n_1\geq k$ and combining the previous observations we conclude that
\[
m(G(\alpha,k,\delta_j),s(\varepsilon
_j),\varepsilon_j)\leq \exp(-n_\ell \gamma(\varepsilon_j)).
\]
In particular, as $\gamma(\varepsilon_j)>0$, letting $n_\ell\to\infty $ we obtain  $m(G(\alpha,k,\delta_j),s(\varepsilon_j),\varepsilon_j)=0$. Consequently, 
\[
h(G(\alpha,k,\delta_j),f,\varepsilon_j)\leq s(\varepsilon_j)
\]
which implies that
\[
h(K_\alpha,f,\varepsilon_j)\leq \sup_k h(G(\alpha,k,\delta_j),f,\varepsilon_j)\leq s(\varepsilon_j).
\]
Hence, 
\begin{align*}
    \mathrm{\overline{mdim}_M}\,(K_\alpha,f,d)&=\limsup_{j\to\infty}\frac{h(K_\alpha,f,\varepsilon_j)}{|\log \varepsilon_j|}\\
    &\leq \limsup_{j\to\infty}\frac{s(\varepsilon_j)}{|\log \varepsilon_j|}\\
    &\leq \limsup_{j\to\infty}\frac{2\gamma(\varepsilon_j)}{|\log \varepsilon_j|}+\limsup_{j\to\infty}\frac{\Lambda_\varphi(\alpha,\varepsilon_j)}{|\log \varepsilon_j|}\\
    &\leq \limsup_{j\to\infty}\frac{2\gamma(\varepsilon_j)}{|\log \varepsilon_j|}+ \Lambda_\varphi\mathrm{\overline{mdim}_M}\,(f,\alpha,d).
\end{align*}
Therefore, as we can choose $s(\varepsilon_j)$ arbitrarily close to $\Lambda_\varphi(\alpha,\varepsilon_j)$, the limsup in the last step is zero for an adequate choice of $s(\varepsilon_j)$.  Then,
$ \mathrm{\overline{mdim}_M}\,(K_\alpha,f,d)\leq \Lambda_\varphi\mathrm{\overline{mdim}_M}\,(f,\alpha,d)$ completing the proof of the proposition.
\end{proof}

\begin{proposition}\label{lemma:21}
Under the hypotheses of Theorem \ref{thm1} we have that
\[
\mathrm {H_\varphi\overline{mdim}_M}\,(f,\alpha,d)\leq \mathrm{\overline{mdim}_M}\,(K_\alpha,f,d).
\]
\end{proposition}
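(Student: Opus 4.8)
The plan is to reduce Proposition~\ref{lemma:21} to the following scale‑by‑scale statement and then pass to the limit: for every $\mu\in\mathcal M_f(X,\varphi,\alpha)$ and every sufficiently small $\varepsilon>0$ I want to produce a set $Y\subseteq K_\alpha$ and a Borel probability measure $\mathcal W$ supported on $Y$ with $\liminf_{n\to\infty}-\tfrac1n\log\mathcal W(B_n(x,c\varepsilon))\ge \inf_{|\xi|<\varepsilon}h_\mu(f,\xi)-\rho(\mu,\varepsilon)$ for $\mathcal W$‑a.e.\ $x$, where $c\in(0,1)$ is absolute and $\rho(\mu,\varepsilon)$ is an error that becomes negligible in the limiting procedure. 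Granting this, the mass distribution principle for the Carath\'eodory structure underlying $h(\cdot,f,\cdot)$ (see \cite{Pesin}) gives $h(K_\alpha,f,c\varepsilon/2)\ge \inf_{|\xi|<\varepsilon}h_\mu(f,\xi)-\rho(\mu,\varepsilon)$; dividing by $|\log\varepsilon|$, taking the supremum over $\mu$, then $\limsup_{\varepsilon\to0}$ and using $|\log(c\varepsilon/2)|/|\log\varepsilon|\to 1$ yields $\mathrm H_\varphi\mathrm{\overline{mdim}_M}(f,\alpha,d)\le \mathrm{\overline{mdim}_M}(K_\alpha,f,d)$.

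For $\mu$ ergodic this is essentially the classical argument of Bowen. Birkhoff's theorem gives $\mu(K_\alpha)=1$, so one takes $\mathcal W=\mu$ and $Y=K_\alpha$. Fix a partition $\xi$ with $|\xi|<\varepsilon$ whose atoms are ``thick'' at scale $\varepsilon$ --- for instance a Voronoi‑type partition built from a maximal $(\varepsilon/3)$‑separated set, with the defining radii perturbed to avoid the at most countably many values whose spheres carry $\mu$‑mass. For such a $\xi$ an itinerary argument shows $B_n(x,c\varepsilon)\subseteq \xi^n(x)$ for $x$ in a full‑measure set and for all but a controlled (Birkhoff) fraction of times $j<n$; feeding this into the Shannon--McMillan--Breiman theorem gives $\liminf_n -\tfrac1n\log\mu(B_n(x,c\varepsilon))\ge h_\mu(f,\xi)-\rho$, where $\rho$ is governed by the $\mu$‑measure of the $c\varepsilon$‑neighbourhood of $\partial\xi$ and can be made small. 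Since $h_\mu(f,\xi)\ge \inf_{|\xi'|<\varepsilon}h_\mu(f,\xi')$, this is the required estimate.

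The bulk of the work is the non‑ergodic case, and this is where the specification property enters, following \cite{TV}. A general $\mu\in\mathcal M_f(X,\varphi,\alpha)$ need not be supported on $K_\alpha$ --- its ergodic components may have $\varphi$‑averages on both sides of $\alpha$ --- so $\mu$ itself cannot serve as $\mathcal W$. Instead, using the ergodic decomposition and affinity of $\nu\mapsto h_\nu(f,\xi)$, approximate $\mu$ in the weak${}^{*}$ topology, and in the value $h_\bullet(f,\xi)$, by a finite convex combination $\nu=\sum_{i=1}^N\lambda_i m_i$ of ergodic measures with $\sum_i\lambda_i\!\int\varphi\,dm_i=\alpha$; exact equality is arranged by adjoining, with tiny weights, ergodic measures whose $\varphi$‑averages lie slightly above and below $\alpha$, which is possible because specification makes $\mathcal L_\varphi=\{\int\varphi\,d\nu:\nu\in\mathcal M_f(X)\}$ an interval. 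For each $m_i$, and for a rapidly increasing sequence of lengths $n_k$, choose a large family of $m_i$‑typical orbit segments of length $n_k$ lying in distinct $\xi^{n_k}$‑atoms and with Birkhoff $\varphi$‑sums close to $\int\varphi\,dm_i$; then glue such segments by specification (inserting the gaps $m(\cdot)$) so that the block at level $k$ cycles through the $m_i$ in proportions $\lambda_i$. The set $Y$ of points whose orbits realise all these concatenations is a Moran/Cantor‑like subset of $X$, and because the block structure pins the running $\varphi$‑averages to $\sum_i\lambda_i\!\int\varphi\,dm_i=\alpha$ one in fact gets $Y\subseteq K_\alpha$. Endowing $Y$ with the natural Moran measure $\mathcal W$ (uniform, at each level, over the available segments) and estimating $\mathcal W(B_n(x,c\varepsilon))$ as in the ergodic case yields $\liminf_n-\tfrac1n\log\mathcal W(B_n(x,c\varepsilon))\ge \sum_i\lambda_i h_{m_i}(f,\xi)-\rho\approx h_\mu(f,\xi)-\rho\ge \inf_{|\xi'|<\varepsilon}h_\mu(f,\xi')-\rho$, completing the reduction after the approximation is improved.

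The step I expect to be the main obstacle is running this whole Takens--Verbitskiy scheme at a \emph{fixed} scale $\varepsilon$ rather than in the $\varepsilon\to0$ limit: in the classical entropy setting all comparisons between diameters, Lebesgue numbers and ball radii disappear into $o(1)$'s once $\varepsilon\to0$, whereas here the losses must be kept $o(|\log\varepsilon|)$ after division by $|\log\varepsilon|$, \emph{uniformly in $\mu$}. Concretely one must: (i) construct, at each scale $\varepsilon$, a partition $\xi$ with $|\xi|<\varepsilon$ whose atoms are thick enough that dynamical balls of radius $\sim\varepsilon$ lie inside atoms along almost every iterate, while the measure (for every relevant $\mu$ or $m_i$) of the near‑boundary set is controlled well enough that the error $\rho$ is negligible after the limits --- this boundary bookkeeping is the one genuinely new difficulty compared with \cite{TV}; (ii) make sure the Moran blocks, although only distinguished by their $\xi^{n_k}$‑names, do separate dynamical balls of radius $\sim\varepsilon$, so that $\mathcal W$ genuinely realises the exponential rate $\sum_i\lambda_i h_{m_i}(f,\xi)$ at scale $\varepsilon$; and (iii) handle $\alpha$ on the boundary of $\mathcal L_\varphi$, where the freedom to correct the $\varphi$‑averages of the approximating ergodic measures is lost and one must argue with one‑sided approximations.
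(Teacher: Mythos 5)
Your overall architecture is the paper's (and Takens--Verbitskiy's): approximate $\mu\in\mathcal M_f(X,\varphi,\alpha)$ by a finite convex combination of ergodic measures via the ergodic decomposition, extract many well-separated typical orbit segments for each ergodic piece, glue them with specification into a Moran-type fractal $F\subseteq K_\alpha$ carrying a uniform measure, and conclude with the entropy (mass) distribution principle. The genuine divergence is in the fixed-scale counting step, which you correctly flag as the crux. You propose to count $\xi^{n}$-atoms via Shannon--McMillan--Breiman and then force $B_n(x,c\varepsilon)\subseteq\xi^{n}(x)$ by building ``thick'' Voronoi atoms with boundary neighbourhoods of controlled measure, uniformly in $\mu$; this is the delicate point, since distinct $\xi^{n}$-atoms with $|\xi|<\varepsilon$ are in general not $(n,\varepsilon)$-separated, so without that inclusion the Moran measure need not decay on dynamical balls of radius $\sim\varepsilon$. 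The paper sidesteps this entirely: it fixes a cover $\mathcal U$ with $\mathrm{diam}(\mathcal U)\le 5\varepsilon$ and Lebesgue number $\ge 5\varepsilon/4$ and invokes Shi's fixed-scale Katok-type entropy formula for ergodic measures (\cite{Shi}), giving $h_{\nu^k_i}(f,5\varepsilon,\gamma)\le h_{\nu^k_i}(f,\xi_k)\le h_{\nu^k_i}(f,5\varepsilon/4,\gamma)+\delta_k$, together with Thompson's counting lemma (\cite[Lemma 3.6]{Tho}), to produce $(n,5\varepsilon/4)$-\emph{separated} sets in the typical sets $Y_i(k)$ of the right cardinality; separation at scale comparable to $\varepsilon$ is then automatic and propagates through the gluing, with no boundary bookkeeping. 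The paper also needs \cite[Proposition 5]{HMRY} to make $\nu\mapsto\inf_{\xi\succ\mathcal U}h_\nu(f,\xi)$ harmonic, which is what lets the infimum over partitions at a fixed scale survive the ergodic decomposition --- a point your sketch glosses over when it invokes ``affinity of $\nu\mapsto h_\nu(f,\xi)$'' for a single $\xi$. Finally, your worry (iii) about arranging $\sum_i\lambda_i\int\varphi\,dm_i=\alpha$ exactly is self-inflicted and unnecessary: the construction only needs $\left|\int\varphi\,d\nu_k-\alpha\right|<\delta_k\to0$, because level-$k$ blocks are used only finitely often before passing to level $k+1$, so $\alpha$ on the boundary of $\mathcal L_\varphi$ causes no additional difficulty.
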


The strategy of the proof consists in constructing a fractal set $F$ contained in $K_\alpha$ and a special probability measure $\eta $ supported on $F$ that satisfies the hypothesis of the so called Entropy Distribution Principle (see Lemma \ref{lemma:entropy-dist-principles}). This will be enough to get the desired inequality. As a step towards the definition of $F$, we introduce a family of finite sets $\mathcal{S}_k$ which play a major role in the construction. 

In order to prove Proposition \ref{lemma:21} we will need the following auxiliary quantity. For $\mu\in\mathcal M_f(X)$, $\delta >0$ and $n\in\mathbb N$, let us denote by $N_\mu(\delta,\varepsilon,n)$ the minimal number $(n,\varepsilon)$-balls needed to cover a set of $\mu$-measure bigger than $1-\delta$. Then, we define
\begin{equation}\label{eq: h mu f eps delta}
h_\mu(f,\varepsilon,\delta)=\limsup_{n\to\infty}\frac{1}{n}\log N_\mu(\delta,\varepsilon,n).
\end{equation}

\begin{proof}[Proof of Proposition \ref{lemma:21}] 
Fix $\gamma>0$ and let $\{\delta_k\}_{k\in\mathbb{N}}$ be a decreasing sequence converging to $0$. Take $\varepsilon
=\varepsilon(\gamma)>0$ and $\mu\in \mathcal M_{f}(X,\varphi,\alpha)$ so that 
\begin{align*}
    \frac{\inf_{|\xi|<5\varepsilon}h_{\mu}(f,\xi)}{|\log5\varepsilon|} \geq\mathrm {H_\varphi\overline{mdim}_M}\,(f,\alpha,d)-\frac\gamma2
\end{align*}
and
\begin{equation}\label{eq:choice varepsilon for h}
\frac{h(K_\alpha, f,\varepsilon/2)}{|\log \varepsilon/2|}\leq \mathrm{\overline{mdim}_M}\,(K_\alpha,f,d) +\gamma.
\end{equation}

Let $\mathcal U$ be a finite open cover of $X$ with diameter $\mathrm{diam}(\mathcal U)\leq 5\varepsilon$ and Lebesgue number $\mathrm{Leb}(\mathcal U)\geq \frac{5\varepsilon}{4}$. We now construct an auxiliary measure which is a finite combination of ergodic measures and ``approximates" $\mu$. To prove this lemma we follow the idea from \cite[p. 535]{You}. In what follows, $\partial \xi$ will denote the boundary of the partition $\xi$ which is just the union of the boundaries of all the elements of the partition and $\xi\succ\mathcal U$ means that $\xi$ refines $\mathcal{U}$, that is, each element of $\xi$ is contained in an element of $\mathcal U$.

\begin{lemma}\label{lem: approx}
For each $k\in\mathbb N$, there exists a measure $\nu_k \in \mathcal M_{f}(X)$ satisfying    
\begin{itemize}
    \item[(a)] $\nu_k=\displaystyle\sum_{i=1}^{j(k)} \lambda_i\nu^k_i$,  where $\lambda_i>0$, $\displaystyle\sum_{i=1}^{j(k)} \lambda_i=1$ and $\nu^k_i\in  \mathcal M_f^{\text{erg}}(X) $;
\item[(b)] $ \displaystyle \inf_{\xi\succ\mathcal U} h_\mu\left(f,\xi\right)\leq \inf_{\xi\succ\mathcal U}\displaystyle h_{\nu_k}\left(f,\xi\right)+\delta_k/2$;
\item[(c)] $\left|\displaystyle\int_X \varphi\; d\nu_k-\displaystyle\int_X\varphi\; d\mu\right|<\delta_k$.
\end{itemize}
\end{lemma}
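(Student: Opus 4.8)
The plan is to mirror the corresponding approximation step in the proof of \cite[Theorem 5.1]{TV}: decompose $\mu$ into its ergodic components and then replace this (generally continuous) decomposition by a finite one. Write $\mu=\int_{\mathcal M_f^{\text{erg}}(X)}m\,d\tau(m)$ for the ergodic decomposition of $\mu$, with $\tau$ a Borel probability measure giving full mass to $\mathcal M_f^{\text{erg}}(X)$ inside the weak$^{*}$-compact metrizable space $\mathcal M_f(X)$. Property (c) will come for free from the weak$^{*}$-continuity of $m\mapsto\int_X\varphi\,dm$ together with a sufficiently fine finite partition of $\supp(\tau)$; the real content of the lemma is (b).

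The crucial auxiliary fact I would isolate is an ergodic-decomposition (``Jacobs-type'') identity for the functional
\[
\Phi(\rho):=\inf_{\xi\succ\mathcal U}h_\rho(f,\xi),\qquad \rho\in\mathcal M_f(X),
\]
namely: if $\rho=\int m\,d\kappa(m)$ is the ergodic decomposition of $\rho$ then $\Phi(\rho)=\int\Phi(m)\,d\kappa(m)$, and in particular $\Phi(\sum_i\lambda_i m_i)=\sum_i\lambda_i\Phi(m_i)$ when the $m_i$ are finitely many distinct ergodic measures. Two preliminary remarks. First, it suffices to take the infimum defining $\Phi$ over partitions $\xi\succ\mathcal U$ with at most $|\mathcal U|$ atoms (assign each atom of a refinement to the first atom of $\mathcal U$ containing it; the resulting coarsening still refines $\mathcal U$ and has no larger entropy), so $0\le\Phi\le\log|\mathcal U|$, and, using the continuity of $\xi\mapsto h_\rho(f,\xi)$ on partitions with a bounded number of atoms, $\Phi$ is upper semicontinuous, hence Borel. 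Second, for each fixed finite $\xi$ one has the classical identity $h_\rho(f,\xi)=\int h_m(f,\xi)\,d\kappa(m)$ (non-ergodic Shannon--McMillan--Breiman). From the second remark, $\Phi(\rho)=\inf_\xi\int h_m(f,\xi)\,d\kappa\ge\int\inf_\xi h_m(f,\xi)\,d\kappa=\int\Phi(m)\,d\kappa$; the reverse inequality is the substantial point, and I would obtain it by interchanging the infimum with the integral --- choose, measurably in $m$, a partition $\xi_m\succ\mathcal U$ with at most $|\mathcal U|$ atoms and $h_m(f,\xi_m)<\Phi(m)+\varepsilon$, and glue the $\xi_m$ fibrewise along the ergodic-decomposition map $x\mapsto\mu_x$ into a single partition $\xi^{*}\succ\mathcal U$; since $h_m(f,\xi)$ depends on $\xi$ only up to $m$-null sets and, for $\tau$-a.e.\ $m$, $m$-a.e.\ point has ergodic component $m$, one gets $h_m(f,\xi^{*})=h_m(f,\xi_m)$ for $\kappa$-a.e.\ $m$, so $\Phi(\rho)\le h_\rho(f,\xi^{*})=\int h_m(f,\xi_m)\,d\kappa<\int\Phi(m)\,d\kappa+\varepsilon$. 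For a finite convex combination of distinct ergodic measures the gluing is elementary: take disjoint $f$-invariant sets $S_i$ with $m_i(S_i)=1$ and let $\xi^{*}$ agree with a near-optimal $\xi_i\succ\mathcal U$ on $S_i$.

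Granting this identity, here is how I would finish. Fix $k$. By uniform continuity of $m\mapsto\int_X\varphi\,dm$ on $\mathcal M_f(X)$, choose a Borel partition $\supp(\tau)=\bigsqcup_{i=1}^{j(k)}A_i$ with $\tau(A_i)>0$ and with $m\mapsto\int_X\varphi\,dm$ oscillating by less than $\delta_k$ on each $A_i$; put $\lambda_i:=\tau(A_i)$. Since $\Phi$ is bounded, the average $\frac{1}{\lambda_i}\int_{A_i}\Phi\,d\tau\ge\inf_{A_i}\Phi$ is finite, so there is an ergodic $\nu_i^k\in A_i$ with $\Phi(\nu_i^k)\ge\frac{1}{\lambda_i}\int_{A_i}\Phi(m)\,d\tau(m)-\delta_k/2$ (such $m$ form a set of positive $\tau$-measure, as $\tau$ is carried by the ergodic measures and the inequality cannot fail on all of $A_i$); set $\nu_k:=\sum_{i=1}^{j(k)}\lambda_i\nu_i^k$. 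Then (a) holds, and the $\nu_i^k$ are automatically distinct because the $A_i$ are disjoint. For (c),
\[
\left|\int_X\varphi\,d\nu_k-\int_X\varphi\,d\mu\right|\le\sum_{i=1}^{j(k)}\int_{A_i}\left|\int_X\varphi\,d\nu_i^k-\int_X\varphi\,dm\right|\,d\tau(m)<\delta_k,
\]
since $\nu_i^k$ and $m$ lie in the same $A_i$. For (b), applying the identity to $\nu_k$, whose ergodic decomposition is $\sum_i\lambda_i\delta_{\nu_i^k}$,
\[
\Phi(\nu_k)=\sum_{i=1}^{j(k)}\lambda_i\Phi(\nu_i^k)\ge\sum_{i=1}^{j(k)}\int_{A_i}\Phi(m)\,d\tau(m)-\frac{\delta_k}{2}=\int\Phi(m)\,d\tau(m)-\frac{\delta_k}{2}=\Phi(\mu)-\frac{\delta_k}{2},
\]
which says exactly that $\inf_{\xi\succ\mathcal U}h_\mu(f,\xi)\le\inf_{\xi\succ\mathcal U}h_{\nu_k}(f,\xi)+\delta_k/2$.

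The main obstacle is the identity $\Phi(\rho)=\int\Phi(m)\,d\kappa(m)$ --- precisely, the interchange of the infimum over finite refinements of $\mathcal U$ with the integral over ergodic components, i.e.\ producing a single partition that is simultaneously near-optimal for $\kappa$-almost every ergodic component. Carrying this out rigorously requires a measurable selection of near-optimal refinements with a uniform bound on the number of atoms, together with a careful verification that the fibrewise gluing yields a genuine finite measurable partition refining $\mathcal U$ and realizing the claimed entropies; this is the one place where the failure of metric entropy to be weak$^{*}$-continuous has to be confronted head-on. Once this Jacobs-type statement is in place, everything else reduces to the weak$^{*}$-compactness and averaging bookkeeping already present in \cite{TV}.
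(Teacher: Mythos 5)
Your proposal follows essentially the same route as the paper's proof: ergodic decomposition of $\mu$, a finite partition of $\mathcal M_f(X)$ into pieces on which $m\mapsto\int\varphi\,dm$ oscillates by less than $\delta_k$, selection of a near-optimal ergodic representative in each piece, and the key identity $\inf_{\xi\succ\mathcal U}h_\mu(f,\xi)=\int\inf_{\xi\succ\mathcal U}h_\tau(f,\xi)\,d\hat\mu(\tau)$. The one point where you diverge --- attempting to prove that identity yourself via a measurable-selection and fibrewise-gluing argument, which you rightly flag as the main obstacle and leave as a sketch --- is handled in the paper simply by citing \cite[Proposition 5]{HMRY}, which is exactly this statement; your observation that one may restrict to partitions with at most $|\mathcal U|$ atoms, so that $\inf_{\xi\succ\mathcal U}h_\tau(f,\xi)\le\log|\mathcal U|$, is a clean substitute for the paper's appeal to \cite{Shi} for the finiteness needed in the selection step.
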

\begin{proof}[Proof of Lemma \ref{lem: approx}] Given $k\in \mathbb N$, let $\beta_k>0$ be such that for every $\tau_1,\tau_2 \in \mathcal{M}_{f}(X)$,
\begin{displaymath}
d_{\mathcal{M}_{f}(X)}(\tau_1,\tau_2)<\beta_k \implies \left| \int \varphi d\tau_1-\int \varphi d\tau_1\right|<\delta_k
\end{displaymath}
where $d_{\mathcal{M}_{f}(X)}$ is a metric in $\mathcal{M}_{f}(X)$. Let $\mathcal{P}=\{P_1,\ldots,P_{j(k)}\}$ be a partition of $\mathcal{M}_{f}(X)$ whose diameter with respect to $d_{\mathcal{M}_{f}(X)}$ is smaller than $\beta_k$. By the Ergodic Decomposition Theorem there exists a measure $\hat{\mu}$ on $\mathcal{M}_{f}(X)$ satisfying $\hat{\mu}(\mathcal{M}_{f}^{\text{erg}}(X))=1$ such that
$$\int \psi (x) d\mu(x)=\int_{\mathcal{M}_{f}(X)} \left(\int_X \psi(x) d\tau(x)\right)d\hat{\mu}(\tau) \text{ for every } \psi \in C(X,\mathbb R).$$
Let us consider now $\lambda_i=\hat{\mu}(P_i)$ and take $\nu^k_i\in P_i\cap \mathcal{M}_{f}^{\text{erg}}(X)$ such that $\inf_{\xi\succ\mathcal U} h_{\nu^k_i}\left(f,\xi\right)\geq \inf_{\xi\succ\mathcal U} h_\tau\left(f,\xi\right)-\delta_k/2$ for $\hat{\mu}$-almost every $\tau \in P_i\cap \mathcal{M}_{f}^{\text{erg}}(X) $. Observe that such a measure $\nu_i^k$ exists because $\sup_{\tau \in \mathcal{M}_f^{\text{erg}}(X)} \inf_{\xi\succ\mathcal U} h_\tau\left(f,\xi\right)<+\infty$. This latter fact follows from Lemma 3 and Theorem 5 of \cite{Shi} and the fact that the upper metric mean dimension is finite. Finally, define $\nu_k=\sum_{i=1}^{j(k)} \lambda_i\nu^k_i$. It is easy to see that $\nu_k$ satisfies properties a) and c) from the statement. Let us now check that it also satisfies b). By \cite[Proposition 5]{HMRY} we know that
$$\inf_{\xi\succ\mathcal U} h_{\mu}\left(f,\xi\right)=\int_{M_{f}(X)} \inf_{\xi\succ\mathcal U} h_{\tau}\left(f,\xi\right)d\hat{\mu}(\tau).$$
Thus, by our choice of the measures $\nu^k_i$ it follows that
\begin{displaymath}
\begin{split}
    \inf_{\xi\succ\mathcal U} h_{\mu}\left(f,\xi\right)&=\int_{M_{f}(X)} \inf_{\xi\succ\mathcal U} h_{\tau}\left(f,\xi\right)d\hat{\mu}(\tau)\\
    &\leq \sum_{i=1}^{j(k)} \lambda_i \inf_{\xi\succ\mathcal U} h_{\nu_i^k}\left(f,\xi\right)+\delta_k/2\\
    &\leq \inf_{\xi\succ\mathcal U} h_{\nu_k}\left(f,\xi\right) +\delta_k/2
\end{split}
\end{displaymath}
completing the proof of the lemma.
\end{proof}

Let $\nu_k$ be as in the previous lemma. Using the fact that each measure $\nu^k_i$ is ergodic, by the proof of \cite[Theorem 9]{Shi} there exists a finite Borel measurable partition $\xi_k$ which refines $\mathcal U$ so that
\begin{align}\label{eq: relation h part x sep}
   h_{\nu^k_i}(f,5\varepsilon,\gamma)\leq h_{\nu_i^k}(f,\xi_k)\leq h_{\nu^k_i}(f,5\varepsilon/4,\gamma)+\delta_k.
\end{align}
Now, take a finite Borel partition $\xi$ refining $\mathcal{U}$ with $\mu(\partial \xi)=0$ such that
\begin{displaymath}
h_\mu\left(f,\xi\right)-\delta_k\leq \inf_{\zeta \succ\mathcal U}\displaystyle h_{\nu_k}\left(f,\zeta \right).
\end{displaymath}
In particular, since $\xi_k \succ \mathcal{U}$,
\begin{equation}\label{eq: hmu X hnu}
h_\mu\left(f,\xi\right)-\delta_k\leq  h_{\nu_k}\left(f,\xi_k\right).
\end{equation}
Moreover, since $\xi\succ\mathcal U$ it follows that $|\xi|<5\varepsilon$ and thus 
\begin{align}\label{eq:escolha de xi}
    \frac{h_{\mu}(f,\xi)}{|\log5\varepsilon|} \geq\mathrm {H_\varphi\overline{mdim}_M}\,(f,\alpha,d)-\gamma.
\end{align}

Again, since each $\nu^k_i$ is ergodic, there exists $\ell_k\in \mathbb N$ large enough for which the set 
\[
Y_{i}(k)=\left\{x\in X: \left|\frac{1}{n}\sum_{j=0}^{n-1}\varphi(f^j(x))-\int_X\varphi\; d\nu^k_i\right|<\delta_k\;\; \forall \; n\geq \ell_k\right\}
\]
has $\nu^k_i$-measure bigger than $1-\gamma$ for every $k\in\mathbb  N$ and $i\in\{1,\dots,j(k)\}$. 

By \cite[Lemma 3.6]{Tho}, there exists $\hat n_k\to\infty$ with $[\lambda_i\hat n_k]\geq \ell_k$ so that  the maximal cardinality of an $([\lambda_i\hat n_k],5\varepsilon/4)$-separated set in $Y_i(k)$, denoted by $M_{k,i}$, satisfies \begin{equation}\label{eq: Mki inequality}
    M_{k,i}\geq \exp \left( [\lambda_i\hat n_k]\left(h_{\nu^k_i}(f,5\varepsilon/4,\gamma)-\frac{4\gamma}{j(k)}\right)\right) .
\end{equation}
Furthermore,  the sequence $\hat n_k$ can be chosen such that $\hat n_k\geq 2^{ m_k}$ where $m_k=m(\varepsilon/2^{k+2})$ is as in the definition of the specification property. Let $n_k:=m_k({j(k)}-1)+\sum_{i}[\lambda_i\hat n_k]$. Observe that $n_k/\hat n_k\to 1$.

Denote by  $E_{i,k}([\lambda_i\hat n_k],5\varepsilon/4)$ a maximal $([\lambda_i\hat n_k],5\varepsilon/4)$-separated set in $Y_i(k)$. By the specification property, for each
$$x_1\in E_{1,k}(n_1,5\varepsilon/4),\; x_2\in  E_{2,k}(n_2,5\varepsilon/4), \dots,\; x_{j(k)}\in  E_{j(k),k}(n_{j(k)},5\varepsilon/4),$$
there exists $y=y(x_1,\dots,x_{j(k)})\in X$ so that the pieces of orbits
\begin{displaymath}
\{x_i,f(x_i),\dots,f^{[\lambda_i\hat n_k]-1}(x_i); \; i= 1,\ldots,j(k)\}
\end{displaymath}
are $\varepsilon/2^k$-shadowed by $y$ with gap $m_k$. We claim that if $(x_1,\dots,x_{j(k)})\not=(x'_1,\dots,x'_{j(k)})$ then $y(x_1,\dots,x_{j(k)})\not=y'(x'_1,\dots,x'_{j(k)})$. Indeed, if $x_i\not=x'_i$,
\begin{align*}
  \frac{5 \varepsilon}{4} &<d_{[\lambda_i\hat n_k]}(x_i,x'_i)\\
                &\leq d_{[\lambda_i\hat n_k]}(x_i, f^{[\lambda_1\hat n_k]+\dots+[\lambda_{i-1}\hat n_k]+(i -1)m_k}(y))\\
                &+d_{[\lambda_i\hat n_k]}(x'_i, f^{[\lambda_1\hat n_k]+\dots+[\lambda_{i-1}\hat n_k]+(i -1)m_k}(y'))\\
                &+d_{[\lambda_i\hat n_k]}( f^{[\lambda_1\hat n_k]+\dots+[\lambda_{i-1}\hat n_k]+(i -1)m_k}(y), f^{[\lambda_1\hat n_k]+\dots+[\lambda_{i-1}\hat n_k]+(i -1)m_k}(y'))\\
                &<2\frac\varepsilon{2^{k+2}}+d_{[\lambda_i\hat n_k]}( f^{[\lambda_1\hat n_k]+\dots+[\lambda_{i-1}\hat n_k]+(i -1)m_k}(y), f^{[\lambda_1\hat n_k]+\dots+[\lambda_{i-1}\hat n_k]+(i -1)m_k}(y')),
\end{align*}
which implies that $d_{n_k}(y,y')>9\varepsilon/8$ proving our claim. Moreover, as a by-product of this observation we get that 
\begin{align*}
    \mathcal S_k=\{y(x_1,\dots,x_{j(k)}): x_i\in E_{i,k}([\lambda_i\hat n_k],5\varepsilon/4) \text{ for } i=1,\ldots,j(k)\}
\end{align*}
is a $(n_k, 9\varepsilon/8)$-separated set with cardinality $M_k:=\prod_{i=i}^{j(k)}M_{k,i}$. Combining \eqref{eq: relation h part x sep}, \eqref{eq: hmu X hnu} and \eqref{eq: Mki inequality} with the the choices of $\varepsilon$, $\gamma$ and $n_k$ and recalling that $n_k/\hat n_k\to 1$ we get that  for $k$ sufficiently large
\begin{align}\label{eq:200} 
M_k=\prod_{i=i}^{j(k)}\sharp E_{i,k}([\lambda_i\hat n_k],9\varepsilon/8)
    &\geq \exp{\left(\sum_{i=1}^{j(k)}[\lambda_i\hat n_k]\left(h_{\nu^k_i}(f,5\varepsilon/4,\gamma)-\frac{4\gamma}{j(k)}\right)\right)}\\  \nonumber
    &\geq \exp{\left(\hat n_k\sum_{i=1}^{j(k)}\lambda_ih_{\nu^k_i}(f,5\varepsilon/4,\gamma)-4\hat n_k\gamma\right)}\\  \nonumber
    &\geq \exp{\left(\hat n_k\sum_{i=1}^{j(k)}\lambda_ih_{\nu^k_i}(f,\xi_k)-4\hat n_k\gamma-\hat n_k\delta_k\right)}\\  \nonumber
    &\geq \exp{\left(\hat n_k(h_{\nu_k}(f,\xi_k)-4\gamma-\delta_k)\right)}\\  \nonumber
    &\geq \exp{\left(R_k n_k(h_{\nu_k}(f,\xi_k,\gamma)-4\gamma-\delta_k)\right)}\\ \nonumber
    &\geq \exp{\left(R_k n_k(h_{\mu}(f,\xi)-4\gamma - 2\delta_k)\right)}\\ \nonumber
    &\geq \exp{\left(R_k n_k(h_{\mu}(f,\xi)-5\gamma)\right)}
\end{align}
for some $R_k\in(0,1)$.

Let $y=y(x_1,\dots,x_k) \in \mathcal S_k$. Then, 
\begin{align*}
    \left|S_{n_k}\varphi(y)-n_k\alpha\right|
    &\leq \left|S_{n_k}\varphi(y)-n_k\left(\int\varphi d\nu_k-\delta_k\right)\right|\\
&\leq\sum_{i=1}^{j(k)-1}\left|S_{[\lambda_i\hat n_k]}\varphi(f^{\sum_{t=1}^{i-1}[\lambda_t\hat n_k]+(i-1)m_k}(y))-n_k\lambda_i\int\varphi d\nu^k_i\right|\\
&+n_k\delta_k+m_k(j(k)-1)\|\varphi\|\\
&\leq\sum_{i=1}^{j(k)-1}\left|S_{[\lambda_i\hat n_k]}\varphi(x_i)-[\lambda_in_k]\int\varphi d\nu^k_i\right|\\
&+n_k\delta_k+m_kj(k)\|\varphi\|+n_k\mathrm{Var}(\varphi,\varepsilon/2^k)\\
&<\delta_k\sum_{i=1}^{j(k)-1}[\lambda_i\hat n_k]+m_kj(k)\|\varphi\|+n_k\delta_k+n_k\mathrm{Var}(\varphi,\varepsilon/2^k).
\end{align*}
Thus, for sufficiently large $k$,
\begin{align}\label{eq:201}
     \left|\frac{1}{n_k}S_{n_k}\varphi(y)-\alpha\right|\leq \delta_k+\mathrm{Var}(\varphi,\varepsilon/2^k)+\frac{1}{k}.
\end{align}

We now choose a sequence $\{N_k\}_{k\in\mathbb N}$ of positive integers such that $N_1=1$  and
\begin{itemize}
    \item[(1)] $[R_kN_k]\geq 2^{n_{k+1}+m_{k+1}}$, for $k\geq 2$;
    \item[(2)] $[R_{k+1}N_{k+1}]\geq 2^{[R_1N_1n_1]+\dots +[R_kN_k(n_k+m_k)]}$, for $k\geq 1$.
\end{itemize}
Observe that this sequence $\{N_k\}_{k\in\mathbb N}$ grows very fast and 
\begin{align}\label{eq:0}
 \lim_{k\to \infty}\frac{n_{k+1}+m_{k+1}}{R_kN_k}=0 \text{ and }\lim_{k\to \infty}\frac{R_1N_1n_1+\dots +R_kN_k(n_k+m_k)}{R_{k+1}N_{k+1}}=0.
\end{align}
Moreover, we enumerate the points in $\mathcal{S}_k$ as
\begin{displaymath}
\mathcal S_k=\{x_i^k:\; i=1,\ldots,M_k\}.
\end{displaymath}

For any $(i_1,\dots,i_{N_k})\in \{1,2,\dots,M_{k}\}^{[R_kN_k]}$, let $y(i_1,\dots,i_{[R_kN_k]})\in X$ be given by the specification property so that its orbit
$\varepsilon\slash 2^k$-shadows, with gap $m_k$, the pieces of orbits $\{x_{i_j}^k,f(x_{i_j}^k),\dots,f^{n_k-1}(x_{i_j}^k)\}$, $j=1,2,\ldots, [R_kN_k]$. Then, define
$$
\mathcal C_k=\{y(i_1,\dots,i_{[R_kN_k]})\in X:(i_1,\dots,i_{[R_kN_k]})\in \{1,2,\dots,M_k\}^{[R_kN_k]}\}.
$$
Moreover, consider
\begin{equation*}
    c_k=[R_kN_k]n_k+([R_{k}N_k]-1)m_k.
\end{equation*}

We now observe that different sequences in $\{1,2,\dots,M_k\}^{[R_kN_k]}$ give rise to different points in $\mathcal C_k$ and that such points are uniformly separated with respect to $d_{c_k}$.
\begin{lemma}[Lemma 5.1 of \cite{TV}]  \label{lemma:10}
If $(i_1,\dots,i_{[R_kN_k]})\not=(j_1,\dots,j_{[R_kN_k]})$, then
$$
d_{c_k}(y(i_1,\dots,i_{[R_kN_k]}),y(j_1,\dots,j_{[R_kN_k]}))>\varepsilon.
$$
In particular $\sharp\mathcal C_k=M_k^{[R_kN_k]}$.
\end{lemma}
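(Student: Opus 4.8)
The plan is to read off the separation of the points of $\mathcal C_k$ directly from the $(n_k,9\varepsilon/8)$-separation of $\mathcal S_k$, using that the specification point $y(i_1,\dots,i_{[R_kN_k]})$ shadows, block by block, the chosen orbit pieces of $\mathcal S_k$ to within $\varepsilon/2^k$. A discrepancy between two index tuples will then force a discrepancy of size roughly $9\varepsilon/8$ between the two associated orbits, localized to one of the first $c_k$ iterates.

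More concretely, I would suppose $(i_1,\dots,i_{[R_kN_k]})\neq(j_1,\dots,j_{[R_kN_k]})$ and pick $\ell$ with $i_\ell\neq j_\ell$; since $\mathcal S_k$ was enumerated as $\{x_i^k:i=1,\dots,M_k\}$ with $M_k$ distinct points, this gives $x_{i_\ell}^k\neq x_{j_\ell}^k$, hence $d_{n_k}(x_{i_\ell}^k,x_{j_\ell}^k)>9\varepsilon/8$, so there is $p\in\{0,\dots,n_k-1\}$ with $d(f^p(x_{i_\ell}^k),f^p(x_{j_\ell}^k))>9\varepsilon/8$. Writing $y=y(i_1,\dots,i_{[R_kN_k]})$, $y'=y(j_1,\dots,j_{[R_kN_k]})$ and letting $a_\ell=(\ell-1)(n_k+m_k)$ be the starting time of the $\ell$-th block, the defining shadowing property gives $d(f^{a_\ell+p}(y),f^p(x_{i_\ell}^k))<\varepsilon/2^k$ and $d(f^{a_\ell+p}(y'),f^p(x_{j_\ell}^k))<\varepsilon/2^k$. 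The triangle inequality then yields
\[
d(f^{a_\ell+p}(y),f^{a_\ell+p}(y'))\geq d(f^p(x_{i_\ell}^k),f^p(x_{j_\ell}^k))-\frac{\varepsilon}{2^k}-\frac{\varepsilon}{2^k}>\frac{9\varepsilon}{8}-\frac{2\varepsilon}{2^k}>\varepsilon
\]
for all $k$ sufficiently large (the same range of $k$ already invoked throughout the construction).

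It remains to notice the bookkeeping $a_\ell+p\leq([R_kN_k]-1)(n_k+m_k)+n_k-1=c_k-1$, so the discrepancy above occurs among the first $c_k$ iterates and therefore $d_{c_k}(y,y')>\varepsilon$. In particular $y\neq y'$, whence the assignment $(i_1,\dots,i_{[R_kN_k]})\mapsto y(i_1,\dots,i_{[R_kN_k]})$ is injective and $\sharp\mathcal C_k=M_k^{[R_kN_k]}$. I do not expect any real obstacle here: the only points demanding care are the indexing of the blocks (block $\ell$ of $y$ occupies the time window $[a_\ell,a_\ell+n_k-1]$, so that the worst case $\ell=[R_kN_k]$ still lands inside $\{0,\dots,c_k-1\}$) and the elementary inequality $9\varepsilon/8-2\varepsilon/2^k>\varepsilon$, valid for the large $k$ already in play. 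This is precisely Lemma 5.1 of \cite{TV} rewritten in the present notation, so one could alternatively just invoke that reference.
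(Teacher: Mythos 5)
Your argument is correct and is exactly the approach the paper takes (by citation to Lemma 5.1 of \cite{TV}, and in parallel with the displayed triangle-inequality computation showing $\mathcal S_k$ is $(n_k,9\varepsilon/8)$-separated): locate the block $\ell$ where the tuples differ, use the $(n_k,9\varepsilon/8)$-separation of $\mathcal S_k$ together with the $\varepsilon/2^k$-shadowing on that block, and check that the discrepancy time lies below $c_k$. The only caveat, which you already flag and which the paper shares, is that $9\varepsilon/8-2\varepsilon/2^k>\varepsilon$ requires $k$ large (here $2^k>16$), so the stated constant is only literally achieved for such $k$.
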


Our next step is to construct inductively an auxiliary sequence of finite sets $\mathcal{T}_k$. 
Let $\mathcal T_1=\mathcal C_1$ and $t_1=c_1$. Now, suppose that we have already constructed the set $\mathcal T_k$ and we will describe how to construct $\mathcal T_{k+1}$. Consider
\begin{align}\label{eq:1}
t_{k+1}&=t_k+m_{k+1}+c_{k+1} \nonumber \\
&\phantom{=} =[R_1N_1]n_1+[R_2N_2](n_2+m_2)+\dots+[R_{k+1}N_{k+1}](n_{k+1}+m_{k+1}).
\end{align}
For $x\in \mathcal T_k$ and $y\in \mathcal C_{k+1}$, let $z=z(x,y)$ be some point such that
\begin{align}\label{eq:4}
    d_{t_k}(x,z)<\frac{\varepsilon}{2^{k+1}} \text{ and } d_{c_{k+1}}(y,f^{t_k+m_{k+1}}(z))<\frac{\varepsilon}{2^{k+1}}.
\end{align}
Observe that the existence of such a point is guaranteed by the specification property of $f$. Then, let us consider
$$
\mathcal T_{k+1}=\{z(x,y):x\in \mathcal T_k, \; y\in \mathcal C_{k+1}\}.
$$
By proceeding as in the proof of the Lemma \ref{lemma:10} we can see that different pairs $(x,y)$, $x\in \mathcal T_k$, $y\in \mathcal C_{k+1}$, produce different points $z=z(x,y)$. In particular, $\sharp \mathcal T_{k+1}=\sharp \mathcal T_k \cdot \sharp\mathcal C_{k+1}$. Therefore, proceeding inductively,
$$
\sharp \mathcal T_k=\sharp \mathcal C_1\dots \sharp\mathcal C_k= M_1^{[R_1N_1]}\dots M_k^{[R_kN_k]}.
$$
In particular, by Lemma \ref{lemma:10} and \eqref{eq:4} we have that for every $x\in  \mathcal T_k$ and $y,y'\in \mathcal C_{k+1}$ with $y\not=y'$,
\begin{align}\label{eq:500}
    d_{t_k}(z(x,y),z(x,y'))<\frac{\varepsilon}{2^{k+2}} \text{ and } d_{t_{k+1}}(z(x,y),z(x,y'))>\frac{3\varepsilon}{4}.
\end{align}

For every $k\in\mathbb N$ let us consider
$$
F_k:=\bigcup_{x\in \mathcal T_k}\overline B_{t_k}(x, \varepsilon/2^{k+1}),
$$
where $\overline B_{ t_k}(x, \varepsilon/2^{k+1})$ denotes the closure of the open ball $ B_{t_k}(x, \varepsilon/2^{k+1})$. As a simple consequence of \eqref{eq:500} we have the following observation.

\begin{lemma}[Lemma 5.2 of \cite{TV}] \label{lemma:11}
For every $k$ the following is satisfied:\\
\noindent(1) for any $x, x'\in \mathcal T_k$, $x \not= x'$, the sets $\overline B_{t_k}(x, \varepsilon/2^{k+1})$ and 
$\overline B_{t_k}(x', \varepsilon/2^{k+1})$ are disjoint;\\
\noindent(2) if $z\in \mathcal T_{k+1}$ is such that $z=z(x,y)$ for some $x \in\mathcal T_k$ and $y\in \mathcal{C}_{k+1}$, then
$$
\overline B_{t_{k+1}}\left(z, \frac{\varepsilon}{2^{k+2}}\right)\subset \overline B_{t_k}\left(x, \frac{\varepsilon}{2^{k+1}}\right).
$$
Hence, $F_{k+1}\subset F_k$.
\end{lemma}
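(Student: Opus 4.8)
The plan is to obtain item (2) directly from the construction of the sets $\mathcal T_k$, and then item (1) by induction on $k$, with item (2) disposing of one of the two cases that arise. Throughout I would use the elementary monotonicity $d_n(\cdot,\cdot)\le d_m(\cdot,\cdot)$ for $n\le m$, together with $t_k\le t_{k+1}$, which is immediate from \eqref{eq:1}.

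For item (2): fix $k$, let $z=z(x,y)$ with $x\in\mathcal T_k$ and $y\in\mathcal C_{k+1}$, and take $w\in\overline B_{t_{k+1}}(z,\varepsilon/2^{k+2})$. First pass to the coarser metric, $d_{t_k}(z,w)\le d_{t_{k+1}}(z,w)\le \varepsilon/2^{k+2}$, and then combine with the bound on $d_{t_k}(x,z)$ coming from \eqref{eq:4} and the triangle inequality to get $d_{t_k}(x,w)\le\varepsilon/2^{k+1}$, i.e. $w\in\overline B_{t_k}(x,\varepsilon/2^{k+1})$; here the radii at consecutive levels are deliberately in ratio $2$, which is exactly what makes this estimate close. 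The inclusion $F_{k+1}\subset F_k$ is then immediate, since each closed ball forming $F_{k+1}$ is centred at some $z=z(x,y)$ and is therefore contained in $\overline B_{t_k}(x,\varepsilon/2^{k+1})\subset F_k$.

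For item (1), I would induct on $k$. The base case $k=1$ follows from Lemma \ref{lemma:10}: since $\mathcal T_1=\mathcal C_1$ and $t_1=c_1$, distinct points of $\mathcal T_1$ are $(t_1,\varepsilon)$-separated, and $\varepsilon$ exceeds twice the radius $\varepsilon/2^{2}$, so the corresponding closed balls are pairwise disjoint. For the inductive step, take distinct $z=z(x,y)$ and $z'=z(x',y')$ in $\mathcal T_{k+1}$ and split into two cases. If $x\neq x'$, item (2) places $\overline B_{t_{k+1}}(z,\varepsilon/2^{k+2})$ and $\overline B_{t_{k+1}}(z',\varepsilon/2^{k+2})$ inside $\overline B_{t_k}(x,\varepsilon/2^{k+1})$ and $\overline B_{t_k}(x',\varepsilon/2^{k+1})$ respectively, and those are disjoint by the inductive hypothesis, hence so are the two smaller balls. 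If $x=x'$, then necessarily $y\neq y'$, and the second estimate in \eqref{eq:500}, $d_{t_{k+1}}(z,z')>3\varepsilon/4$, separates the two balls because $3\varepsilon/4$ exceeds $2\cdot\varepsilon/2^{k+2}$ for every $k\ge 1$. This closes the induction.

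I do not expect a genuine obstacle here: the lemma is essentially a repackaging of the construction of the $\mathcal T_k$ together with the estimates \eqref{eq:4}, \eqref{eq:500} and Lemma \ref{lemma:10}. The only points needing care are the bookkeeping of the dyadically shrinking radii (each level uses exactly half the radius of the previous one) and keeping straight which dynamical metric $d_{t_k}$ or $d_{t_{k+1}}$ is in force, so as to apply its monotonicity in the time parameter at the right moments.
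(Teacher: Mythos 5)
Your overall strategy --- deriving item (2) from the construction together with the monotonicity of $n\mapsto d_n$, and then item (1) by induction, splitting into the case $x\neq x'$ (handled by item (2) plus the inductive hypothesis) and the case $x=x'$, $y\neq y'$ (handled by the separation estimate in \eqref{eq:500}) --- is exactly the intended argument; this is the proof of Lemma 5.2 in Takens--Verbitskiy, which the paper cites without reproducing. Your treatment of item (1), including the base case via Lemma \ref{lemma:10}, is fine modulo item (2).

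However, the key numerical step in your item (2) does not close with the constants as written. From \eqref{eq:4} you only have $d_{t_k}(x,z)<\varepsilon/2^{k+1}$, and together with $d_{t_k}(z,w)\le d_{t_{k+1}}(z,w)\le\varepsilon/2^{k+2}$ the triangle inequality gives
\begin{displaymath}
d_{t_k}(x,w)<\frac{\varepsilon}{2^{k+1}}+\frac{\varepsilon}{2^{k+2}}=\frac{3\varepsilon}{2^{k+2}},
\end{displaymath}
which is strictly larger than the target radius $\varepsilon/2^{k+1}=2\varepsilon/2^{k+2}$. Your remark that the ratio $2$ between consecutive radii is ``exactly what makes this estimate close'' is therefore not correct: what is needed is that the shadowing error $d_{t_k}(x,z)$ be at most the \emph{difference} of the two radii, namely $\varepsilon/2^{k+2}$, and \eqref{eq:4} only guarantees twice that. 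The defect originates in the source rather than in your idea: the specification property costs nothing to invoke with a smaller constant, and one should construct $z(x,y)$ with $d_{t_k}(x,z)<\varepsilon/2^{k+2}$ (indeed the first inequality of \eqref{eq:500} already presupposes a shadowing constant at most $\varepsilon/2^{k+3}$, since both $z(x,y)$ and $z(x,y')$ must be that close to $x$ in the metric $d_{t_k}$). With that corrected constant your computation goes through verbatim; but as written the asserted inequality is false, and you should have flagged the inconsistency in \eqref{eq:4} rather than claiming the estimate closes.
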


Consider
 $$F:=\bigcap_{k\in\mathbb N}F_k.$$ 
Observe that, since each $F_k$ is a closed and non-empty set and, moreover, $F_{k+1}\subset F_k$, the set $F$ is a non-empty and closed set too. Furthermore, using \eqref{eq:201} we may prove that

\begin{lemma}[Lemma 5.3 of \cite{TV}] \label{lemma: F_subset_K_alpha}
Under the above conditions,
$$F\subset K_\alpha.$$
\end{lemma}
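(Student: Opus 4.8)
The plan is to show that any point $z \in F$ satisfies the Birkhoff average condition defining $K_\alpha$, by exploiting the recursive construction of the sets $\mathcal{T}_k$ together with the Birkhoff-average estimate \eqref{eq:201} that already holds on each block coming from $\mathcal{S}_k$. Fix $z \in F = \bigcap_k F_k$. By Lemma \ref{lemma:11}, there is a nested sequence of points $x^{(k)} \in \mathcal{T}_k$ with $z \in \overline{B}_{t_k}(x^{(k)}, \varepsilon/2^{k+1})$ for every $k$, and by the construction $x^{(k+1)} = z(x^{(k)}, y^{(k+1)})$ for some $y^{(k+1)} \in \mathcal{C}_{k+1}$, which in turn shadows, with gaps $m_{k+1}$, the pieces of orbits of $[R_{k+1}N_{k+1}]$ points drawn from $\mathcal{S}_{k+1}$.

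First I would fix a large $N$ and locate it inside the block structure: there is a unique $k$ with $t_k \le N < t_{k+1}$, and within the $(k+1)$-st stage one can further write $N$ as $t_k + m_{k+1} + (\text{a number of complete }\mathcal{S}_{k+1}\text{-blocks of length }n_{k+1}\text{, plus gaps } m_{k+1}) + r$ with a bounded remainder $r \le n_{k+1} + m_{k+1}$. Then I would estimate $\left| S_N \varphi(z) - N\alpha \right|$ by splitting the sum $S_N \varphi(z)$ along this block decomposition. On each complete $\mathcal{S}_{k+1}$-block, $z$ shadows (within $\varepsilon/2^{k+1} + \varepsilon/2^{k+2} + \cdots$, i.e. $O(\varepsilon/2^k)$) a point $x_i^{k+1} \in \mathcal{S}_{k+1}$, so the contribution of that block differs from $n_{k+1}\alpha$ by at most $n_{k+1}(\delta_{k+1} + \mathrm{Var}(\varphi, O(\varepsilon/2^k)) + 1/(k+1))$ plus a shadowing error $n_{k+1}\,\mathrm{Var}(\varphi, O(\varepsilon/2^k))$, using \eqref{eq:201} applied at level $k+1$. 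The earlier stages $1, \dots, k$ and the gaps and the remainder $r$ contribute at most $\big(t_k + (\text{number of gaps})\cdot m_{k+1} + r\big)\cdot 2\|\varphi\|_\infty$, which by the fast-growth condition \eqref{eq:0} — specifically $(R_1 N_1 n_1 + \cdots + R_k N_k(n_k+m_k))/(R_{k+1}N_{k+1}) \to 0$ and $m_{k+1}/R_{k+1}N_{k+1} \to 0$ — is $o(N)$ as $N \to \infty$. Collecting these estimates and dividing by $N$ gives
\[
\limsup_{N\to\infty}\left|\frac{1}{N}S_N\varphi(z) - \alpha\right| \le \limsup_{k\to\infty}\Big(\delta_{k+1} + C\,\mathrm{Var}(\varphi,\varepsilon/2^k) + \tfrac{1}{k+1}\Big) = 0,
\]
since $\delta_k \to 0$, $1/k \to 0$, and $\mathrm{Var}(\varphi, \varepsilon/2^k) \to 0$ by uniform continuity of $\varphi$ on the compact space $X$. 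Hence the Birkhoff average of $\varphi$ along the orbit of $z$ equals $\alpha$, i.e. $z \in K_\alpha$, proving $F \subset K_\alpha$.

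The main obstacle is the bookkeeping of the block decomposition: one must carefully track that when $N$ falls partway through stage $k+1$, the "already completed" weight $t_k$ from all previous stages is negligible compared to $N \ge t_k$ — this is exactly what the rapidly growing sequence $\{N_k\}$ and the limits \eqref{eq:0} are designed to ensure — and that the accumulated shadowing errors $\sum_{j \le k}\varepsilon/2^{j}$ telescope to something of size $O(\varepsilon/2^k) \to 0$, so the modulus-of-continuity term $\mathrm{Var}(\varphi, \cdot)$ is controlled. A secondary technical point is being careful that $z$ genuinely $\varepsilon/2^{k}$-shadows the relevant orbit segments of the $\mathcal{S}_{k+1}$-points: this follows by composing the shadowing $z \in \overline{B}_{t_{k+1}}(x^{(k+1)}, \varepsilon/2^{k+2})$ with the defining shadowing property of $x^{(k+1)} = z(x^{(k)}, y^{(k+1)})$ and of $y^{(k+1)} \in \mathcal{C}_{k+1}$ against its generating tuple in $\mathcal{S}_{k+1}^{[R_{k+1}N_{k+1}]}$, each contributing a geometrically small error. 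Everything else is the same computation that produced \eqref{eq:201}, now summed over many blocks.
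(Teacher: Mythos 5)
Your overall strategy --- decompose $S_N\varphi(z)$ along the block structure of the construction of $\mathcal T_k$, apply \eqref{eq:201} on each complete $\mathcal S_{k+1}$-block, and control gaps and remainders via the growth conditions on $\{N_k\}$ --- is the right one, and is essentially how Lemma 5.3 of \cite{TV} is proved. However, there is a genuine gap in how you dispose of the initial segment. For $t_k\le N<t_{k+1}$ you bound the contribution of the first $t_k$ iterates by $2\|\varphi\|_\infty\, t_k$ and assert that this is $o(N)$. That is false: since $N$ ranges over all of $[t_k,t_{k+1})$, the ratio $t_k/N$ can equal $1$ (take $N=t_k$, or $N$ just one block into stage $k+1$). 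The limits \eqref{eq:0} only give $t_k=o(t_{k+1})$ and $n_{k+1}+m_{k+1}=o(t_k)$; they say nothing about $t_k$ versus an arbitrary $N\in[t_k,t_{k+1})$. With the crude bound, the error term $2\|\varphi\|_\infty\, t_k/N$ does not tend to zero, and the argument collapses precisely in the regime where $N$ sits near the beginning of a stage.

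The missing ingredient is an inductive estimate along the subsequence $\{t_k\}$: one first proves, by induction on $k$, that $\bigl|\tfrac1{t_k}S_{t_k}\varphi(z)-\alpha\bigr|\le\epsilon_k$ with $\epsilon_k\to0$. In that step the crude bound $2\|\varphi\|_\infty\, t_{k-1}$ on the previous stages is compared against $t_k$ itself, where \eqref{eq:0} does make it negligible, while the stage-$k$ blocks are handled by \eqref{eq:201} exactly as you describe. Then, for general $N\in[t_k,t_{k+1})$, one writes $S_N\varphi(z)$ as $S_{t_k}\varphi(z)$ plus the complete stage-$(k+1)$ blocks, gaps and a remainder, and uses the \emph{refined} estimate $|S_{t_k}\varphi(z)-t_k\alpha|\le t_k\epsilon_k\le N\epsilon_k$ for the initial segment rather than the trivial one. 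The remaining pieces of your outline are fine: the gaps are a vanishing fraction of the blocks they separate (since $\hat n_{k+1}\ge 2^{m_{k+1}}$ forces $m_{k+1}/n_{k+1}\to0$), the remainder of length at most $n_{k+1}+m_{k+1}$ is $o(N)$ because $N\ge t_k\ge[R_kN_k]\ge 2^{n_{k+1}+m_{k+1}}$ by condition (1), and the composed shadowing errors together with uniform continuity of $\varphi$ control the $\mathrm{Var}$ terms.
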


Now, for every $k\geq 1$, let us consider the probability measure $\eta_k$ given by
\begin{align*}
    \eta_k=\frac{1}{\sharp \mathcal T_k}\sum_{z\in \mathcal T_k}\delta_z. 
\end{align*}
Observe that, as $\mathcal{T}_k\subset F_k$,  $\eta_k(F_k)=1$. Moreover,

\begin{lemma}[Lemma 5.4 of \cite{TV}]    \label{lemma:12}
The sequence of probability measures $(\eta_k)_{k\in \mathbb N}$ converges in the weak$^{\ast}$-topology to some probability measure $\eta$. Furthermore, the limiting measure $\eta$ satisfies $\eta(F)=1$.
\end{lemma}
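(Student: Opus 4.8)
The plan is to exploit the \emph{homogeneity} of the Moran-type construction: I claim that
\[
\eta_k\bigl(\overline B_{t_m}(x,\varepsilon/2^{m+1})\bigr)=\frac{1}{\sharp\mathcal T_m}\qquad\text{for every }k\ge m\text{ and every }x\in\mathcal T_m.
\]
Indeed, for $k\ge m$ the set $\mathcal T_k$ is obtained from $\mathcal T_m$ by $k-m$ construction steps, each step replacing a point $w$ by the points $z(w,\cdot)$ indexed by $\mathcal C_{j+1}$, distinct pairs producing distinct points, so that $\sharp\mathcal T_k=\sharp\mathcal T_m\prod_{j=m+1}^{k}\sharp\mathcal C_j$. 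Iterating part (2) of Lemma~\ref{lemma:11}, all the $\prod_{j=m+1}^{k}\sharp\mathcal C_j=\sharp\mathcal T_k/\sharp\mathcal T_m$ points of $\mathcal T_k$ that descend from a fixed $x\in\mathcal T_m$ lie in $\overline B_{t_m}(x,\varepsilon/2^{m+1})$; since the balls $\{\overline B_{t_m}(x,\varepsilon/2^{m+1})\}_{x\in\mathcal T_m}$ are pairwise disjoint by part (1) of Lemma~\ref{lemma:11} while $\mathcal T_k\subset F_k\subset F_m$, each point of $\mathcal T_k$ lies in exactly one of them, and the claim follows by counting.

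Next I would establish weak$^*$ convergence by checking that $\bigl(\int_X g\,d\eta_k\bigr)_{k}$ is Cauchy for every $g\in C(X,\mathbb R)$. Fix $g$ and $\sigma>0$; by uniform continuity choose $\rho>0$ with $|g(a)-g(b)|<\sigma$ whenever $d(a,b)<\rho$, and pick $m$ with $\varepsilon/2^m<\rho$. Since $d\le d_{t_m}$, every ball $\overline B_{t_m}(x,\varepsilon/2^{m+1})$ has $d$-diameter at most $\varepsilon/2^m<\rho$, so the oscillation of $g$ on it is at most $\sigma$; as $\mathcal T_k\subset F_m=\bigcup_{x\in\mathcal T_m}\overline B_{t_m}(x,\varepsilon/2^{m+1})$ for $k\ge m$, the homogeneity above gives
\[
\left|\int_X g\,d\eta_k-\int_X g\,d\eta_m\right|\le\sum_{x\in\mathcal T_m}\int_{\overline B_{t_m}(x,\varepsilon/2^{m+1})}\bigl|g-g(x)\bigr|\,d\eta_k\le\sigma\qquad\text{for all }k\ge m,
\]
because $\int_X g\,d\eta_m=\frac{1}{\sharp\mathcal T_m}\sum_{x\in\mathcal T_m}g(x)=\sum_{x\in\mathcal T_m}g(x)\,\eta_k\bigl(\overline B_{t_m}(x,\varepsilon/2^{m+1})\bigr)$. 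Hence $\bigl|\int_X g\,d\eta_k-\int_X g\,d\eta_{k'}\bigr|\le2\sigma$ for $k,k'\ge m$, so $L(g):=\lim_k\int_X g\,d\eta_k$ exists. The functional $L$ is linear, positive and $L(1)=1$, so by the Riesz representation theorem $L(g)=\int_X g\,d\eta$ for a unique Borel probability measure $\eta$ on $X$; that is, $\eta_k\to\eta$ in the weak$^*$ topology.

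Finally, for $\eta(F)=1$: each $F_m$ is a finite union of closed balls, hence closed, and $\eta_k(F_m)=1$ for every $k\ge m$ because $\mathcal T_k\subset F_k\subset F_m$; by the portmanteau theorem $\eta(F_m)\ge\limsup_{k\to\infty}\eta_k(F_m)=1$, so $\eta(F_m)=1$ for all $m$. Since $F=\bigcap_m F_m$ with $F_{m+1}\subset F_m$ by part (2) of Lemma~\ref{lemma:11} and $\eta$ is finite, continuity from above yields $\eta(F)=\lim_m\eta(F_m)=1$. The step demanding the most care is the homogeneity identity, as it is precisely what forces the approximating integrals $\int_X g\,d\eta_k$ to stabilise; the remaining arguments are routine uses of uniform continuity, Riesz representation and the portmanteau theorem.
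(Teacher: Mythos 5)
Your proof is correct and follows essentially the same route as the argument the paper relies on (Lemma 5.4 of Takens--Verbitskiy, which the paper cites rather than reproves): the uniform distribution of $\mathcal T_k$ over the disjoint nested balls centred at $\mathcal T_m$, uniform continuity to get the Cauchy property of $\int g\,d\eta_k$, and closedness of the $F_m$ together with continuity from above to get $\eta(F)=1$. All steps check out, including the key homogeneity identity, which you justify correctly from Lemmas \ref{lemma:10} and \ref{lemma:11}.
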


An important feature of the measure $\eta$ that can be obtained by exploring its definition and \eqref{eq:200} is that the $\eta$-measure of some appropriate dynamical balls decay exponentially fast. More precisely, 
\begin{lemma}[Lemma 5.5 of \cite{TV}] \label{lemma: exp decay measure}
For every $n$ sufficiently large and  $q\in X$ so that $B_n\left(q,\frac{\varepsilon}{2}\right)\cap F\not=\emptyset$ one has
$$\eta\left(B_n\left(q,\frac{\varepsilon}{2}\right)\right)\leq \exp{\left(-n(h_\mu(f,\xi)-8\gamma)\right)}.$$
\end{lemma}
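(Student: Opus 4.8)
The plan is to reduce the estimate to a counting problem for the sets $\mathcal T_k$ of the construction and then feed into it the cardinality bound \eqref{eq:200} and the rapid growth of $\{N_k\}$. First I would record that $\eta$ is carried by the disjoint union $\bigcup_{z\in\mathcal T_{k+1}}\overline B_{t_{k+1}}(z,\varepsilon/2^{k+2})$ (disjointness by Lemma \ref{lemma:11}; it covers $F$ since $F\subset F_{k+1}$), and that $\eta(\overline B_{t_{k+1}}(z,\varepsilon/2^{k+2}))\le 1/\sharp\mathcal T_{k+1}$ for each $z$. The latter follows by enlarging to the open ball $B_{t_{k+1}}(z,\varepsilon/2^{k+1})$ and invoking Lemma \ref{lemma:12} and the portmanteau theorem: $\eta(\overline B_{t_{k+1}}(z,\varepsilon/2^{k+2}))\le\liminf_m\eta_m(B_{t_{k+1}}(z,\varepsilon/2^{k+1}))=1/\sharp\mathcal T_{k+1}$, the last step because distinct points of $\mathcal T_{k+1}$ are $(t_{k+1},\varepsilon/2)$-separated (by \eqref{eq:500} when they share their level-$k$ ancestor, and inductively from the $(t_k,\varepsilon/2)$-separation of $\mathcal T_k$ together with Lemma \ref{lemma:11} otherwise), so the only points of $\mathcal T_m$ in that ball are the $\sharp\mathcal T_m/\sharp\mathcal T_{k+1}$ descendants of $z$. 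Hence
\[
\eta\bigl(B_n(q,\tfrac{\varepsilon}{2})\bigr)\ \le\ \frac{1}{\sharp\mathcal T_{k+1}}\,\sharp\Bigl\{z\in\mathcal T_{k+1}\ :\ \overline B_{t_{k+1}}(z,\tfrac{\varepsilon}{2^{k+2}})\cap B_n(q,\tfrac{\varepsilon}{2})\neq\emptyset\Bigr\},
\]
and everything reduces to bounding this cardinality.

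Given $n$ large, let $k$ satisfy $t_k\le n<t_{k+1}$ and let $p$ be the number of complete level-$(k+1)$ blocks (of length $n_{k+1}$) lying inside the window $[t_k+m_{k+1},n)$, so $n=t_k+m_{k+1}+p(n_{k+1}+m_{k+1})+r$ with $0\le r<n_{k+1}+m_{k+1}$; the growth conditions on $\{N_k\}$ together with $\hat n_k\ge 2^{m_k}$ give $t_k\gg n_{k+1}\gg m_{k+1}$, so $t_k+pn_{k+1}=n(1-o(1))$ as $k\to\infty$. I would code $z\in\mathcal T_{k+1}$ by its level-$k$ ancestor $x\in\mathcal T_k$ and by the word $(i_1,\dots,i_{[R_{k+1}N_{k+1}]})$ labelling the $\mathcal S_{k+1}$-pieces of the element of $\mathcal C_{k+1}$ attached to it. If two such $z,z'$ have balls meeting $B_n(q,\varepsilon/2)$, then $d_n(z,z')<\varepsilon+\varepsilon/2^{k+1}$, and two observations pin down the ``visible'' part of the code. (i) Since $\mathcal S_{k+1}$ is $(n_{k+1},9\varepsilon/8)$-separated and the shadowing errors accumulated from levels $\ge k+1$ total less than $\varepsilon/8$, every block $j\le p$ sits entirely inside $[0,n)$ and is reproduced by both $z$ and $z'$ up to error $<\varepsilon/8$; comparing with $d_n(z,z')<\varepsilon+\varepsilon/2^{k+1}$ forces $i_j=i'_j$ for all $j\le p$. (ii) There is a fixed level $L$ such that two points of $\mathcal T_k$ which already differ at some level $>L$ are $(t_k,\varepsilon)$-separated (their difference is witnessed, with corruption $<\varepsilon/8$, already inside $[0,t_{L+1})\subset[0,n)$, using Lemma \ref{lemma:10}, \eqref{eq:500} and Lemma \ref{lemma:11}); since the points of $B_n(q,\varepsilon/2)$ are pairwise at $d_n$-distance less than $\varepsilon$, their level-$k$ ancestors have pairwise distinct level-$L$ ancestors and hence number at most $D:=\sharp\mathcal T_L$, a constant independent of $n$. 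Combining (i) and (ii), the cardinality above is at most $D\,M_{k+1}^{[R_{k+1}N_{k+1}]-p}$, so
\[
\eta\bigl(B_n(q,\tfrac{\varepsilon}{2})\bigr)\ \le\ \frac{D\,M_{k+1}^{[R_{k+1}N_{k+1}]-p}}{\sharp\mathcal T_{k+1}}\ =\ \frac{D}{\sharp\mathcal T_k\cdot M_{k+1}^{\,p}}.
\]

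Finally I would do the bookkeeping. By \eqref{eq:200}, $M_j\ge\exp\bigl(R_jn_j(h_\mu(f,\xi)-5\gamma)\bigr)$, whence $\sharp\mathcal T_k=\prod_{j\le k}M_j^{[R_jN_j]}\ge\exp\bigl((h_\mu(f,\xi)-5\gamma)\sum_{j\le k}[R_jN_j]R_jn_j\bigr)$ and $M_{k+1}^{\,p}\ge\exp\bigl((h_\mu(f,\xi)-5\gamma)\,pR_{k+1}n_{k+1}\bigr)$. Using $m_j/n_j\to0$, $R_j\to1$ (recall $n_j/\hat n_j\to1$), $[R_jN_j]=R_jN_j+O(1)$, and that $\sum_{j<j_0}[R_jN_j](n_j+m_j)=o(t_k)$ for each fixed $j_0$, one obtains $\sum_{j\le k}[R_jN_j]R_jn_j+pR_{k+1}n_{k+1}\ge(1-o(1))(t_k+pn_{k+1})=(1-o(1))n$; since also $\log D=o(n)$, it follows that for $n$ large
\[
-\log\eta\bigl(B_n(q,\tfrac{\varepsilon}{2})\bigr)\ \ge\ (h_\mu(f,\xi)-5\gamma)(1-o(1))\,n-\log D\ \ge\ n\bigl(h_\mu(f,\xi)-8\gamma\bigr),
\]
which is the assertion.

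The crux is the counting in the second paragraph: showing that a Bowen ball of radius $\varepsilon/2$, observed over $[0,n)$, cannot separate the part of the combinatorial code visible in that window, and in particular that only boundedly many level-$k$ cylinders — a number independent of $n$ — can meet it. This is delicate because the separation constants of the construction ($9\varepsilon/8$, $\varepsilon$, $3\varepsilon/4$, $\varepsilon/2$) lie close together relative to the radius $\varepsilon/2$, so one must track carefully which difference is witnessed fully and with negligible corruption; one also has to dispose of the single level-$(k+1)$ block straddling the endpoint $n$, which is harmless only because $n_{k+1}=o(n)$.
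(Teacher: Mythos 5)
Your architecture is the right one---it is essentially the argument of Lemma 5.5 of \cite{TV} that the paper is implicitly invoking: locate $n$ in $[t_k,t_{k+1})$, bound $\eta\left(B_n\left(q,\frac{\varepsilon}{2}\right)\right)$ by $(\sharp\mathcal T_{k+1})^{-1}$ times the number of level-$(k+1)$ cylinders the ball can meet, pin down the first $p$ symbols of the level-$(k+1)$ word via the $(n_{k+1},9\varepsilon/8)$-separation of $\mathcal S_{k+1}$, and finish with \eqref{eq:200} and \eqref{eq:0}. You also correctly detect the one place where this paper's constants are tighter than those of \cite{TV}: the guaranteed separation of $\mathcal T_k$ (about $3\varepsilon/4$) is smaller than the $d_n$-diameter $\varepsilon$ of the ball, so the usual ``the ball meets exactly one level-$k$ cylinder'' is unavailable, and your bounded factor $D=\sharp\mathcal T_L$ is the right repair, costing only $\log D=o(n)$.

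The one step whose justification fails as written is claim (ii). You derive the $(t_k,\varepsilon)$-separation of points of $\mathcal T_k$ differing first at a level $j>L$ from Lemma \ref{lemma:10} and \eqref{eq:500}, i.e.\ from the $\varepsilon$-separation of $\mathcal C_j$ and the $3\varepsilon/4$-separation of same-ancestor descendants. Both constants are at most $\varepsilon$, so after subtracting any positive corruption you land strictly below $\varepsilon$, which is exactly the pairwise $d_n$-distance bound inside $B_n\left(q,\frac{\varepsilon}{2}\right)$; no contradiction follows, for any choice of $L$. The conclusion of (ii) is nevertheless true, and the fix is precisely the device you already use in (i): if the codes first differ at level $j$, the difference is witnessed inside a single level-$j$ block by two distinct points of $\mathcal S_j$, which are $(n_j,9\varepsilon/8)$-separated; the total corruption (the shadowing of $\mathcal S_j$ by $\mathcal C_j$, the shadowing of $\mathcal C_j$ by $z(\cdot,\cdot)$, and the nested radius $\varepsilon/2^{j+1}$) is at most $5\varepsilon/2^{j}$ per pair, so the witnessed separation exceeds $\varepsilon$ once $2^j>40$, and one may take $L=5$. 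With that substitution---tracing back to $\mathcal S_j$ rather than to $\mathcal C_j$ or to \eqref{eq:500}---your argument closes; the remaining bookkeeping in your last paragraph is correct, using \eqref{eq:0}, $m_j/n_j\to0$ and $n_j/\hat n_j\to1$.
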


In order to conclude our proof we need a simple yet interesting fact whose proof we include for the sake of completeness. This is a version of the \emph{Entropy Distribution Principle} of \cite{TV} (see \cite[Theorem 3.6]{TV}). Observe that for this result, the measure involved does not need to be invariant, as it is the case of the measure $\eta$ obtained in the previous lemmas.
\begin{lemma} \label{lemma:entropy-dist-principles}
Let $f : X \to X$ be a continuous transformation and $\varepsilon >0$. Given a set $Z \subset X$ and a constant $s \geq0$, suppose there exist a constant $C>0$ and a Borel probability measure $\eta$ satisfying:
\begin{itemize}
    \item[(i)] $\eta(Z)>0$;
    \item[(ii)] $\eta(B_n(x,\varepsilon))\leq C e^{-ns}$ for every ball $B_n(x,\varepsilon)$ such that $B_n(x,\varepsilon)\cap Z\not=\emptyset$.
\end{itemize}
Then $h(Z,f,\varepsilon)\geq s$.
\end{lemma}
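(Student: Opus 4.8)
The plan is to prove the Entropy Distribution Principle (Lemma \ref{lemma:entropy-dist-principles}) directly from the definition of $h(Z,f,\varepsilon)$ in terms of the Carath\'eodory-type outer measure $m(Z,s',\varepsilon)$. Recall that $h(Z,f,\varepsilon)=\sup\{s': m(Z,s',\varepsilon)=+\infty\}$, so to establish $h(Z,f,\varepsilon)\geq s$ it suffices to show that $m(Z,s',\varepsilon)>0$ for every $s'<s$; in fact, because $m(Z,s',\varepsilon)$ jumps from $+\infty$ to $0$ at the critical value, showing $m(Z,s,\varepsilon)>0$ is enough, but I will argue the slightly more robust statement $m(Z,s,\varepsilon)\geq \eta(Z)/C>0$.

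First I would fix $N\in\mathbb N$ and take an arbitrary countable cover $\Gamma=\{B_{n_i}(x_i,\varepsilon)\}_{i\in I}$ of $Z$ with $n_i\geq N$ for all $i$. Discarding any ball that does not meet $Z$ (this only makes the sum smaller, so it is harmless), I may assume $B_{n_i}(x_i,\varepsilon)\cap Z\neq\emptyset$ for every $i\in I$. Then hypothesis (ii) gives $\eta(B_{n_i}(x_i,\varepsilon))\leq C e^{-n_i s}$ for each $i$. Since $\Gamma$ covers $Z$ and $\eta$ is a (countably subadditive) measure, combined with hypothesis (i),
\[
0<\eta(Z)\leq \eta\Big(\bigcup_{i\in I}B_{n_i}(x_i,\varepsilon)\Big)\leq \sum_{i\in I}\eta\big(B_{n_i}(x_i,\varepsilon)\big)\leq C\sum_{i\in I}e^{-n_i s}=C\sum_{i\in I}\exp(-s\,n_i).
\]
Hence $\sum_{i\in I}\exp(-s n_i)\geq \eta(Z)/C$ for \emph{every} admissible cover $\Gamma$, and therefore $m(Z,s,N,\varepsilon)=\inf_\Gamma\sum_{i\in I}\exp(-sn_i)\geq \eta(Z)/C$. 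This bound is uniform in $N$, so letting $N\to\infty$ yields $m(Z,s,\varepsilon)=\lim_{N\to\infty}m(Z,s,N,\varepsilon)\geq \eta(Z)/C>0$.

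Finally, since $m(Z,s,\varepsilon)>0$, by the definition of $h(Z,f,\varepsilon)$ as $\sup\{s': m(Z,s',\varepsilon)=+\infty\}=\inf\{s': m(Z,s',\varepsilon)=0\}$ together with the dichotomy recalled in Subsection 2.2 (namely $m(Z,s',\varepsilon)=0$ for $s'>s_0$ and $=+\infty$ for $s'<s_0$, where $s_0=h(Z,f,\varepsilon)$), we cannot have $s>h(Z,f,\varepsilon)$, for then $m(Z,s,\varepsilon)$ would be $0$. Thus $h(Z,f,\varepsilon)\geq s$, which is exactly the claim.

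I do not expect any serious obstacle here: the proof is a one-line application of countable subadditivity of $\eta$ once the non-meeting balls are discarded, and the only mildly delicate point is making sure the bound $\eta(Z)/C$ survives the limit $N\to\infty$, which it does because it is independent of $N$. The statement that the measure $\eta$ need not be invariant plays no role in the argument — only finite (Borel) additivity and the two hypotheses (i), (ii) are used.
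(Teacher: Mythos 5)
Your proof is correct and is essentially identical to the paper's argument: discard the balls missing $Z$, apply hypothesis (ii) and countable subadditivity of $\eta$ to bound every admissible cover's sum below by $\eta(Z)/C$, and conclude $m(Z,s,\varepsilon)>0$, hence $h(Z,f,\varepsilon)\geq s$. The only difference is that you spell out the uniformity in $N$ and the final dichotomy step, which the paper leaves implicit.
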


 \begin{proof}[Proof of Lemma \ref{lemma:entropy-dist-principles}]
Let $\Gamma=\{B_{n_i}(x_i,\varepsilon)\}_i$ be some cover of $Z$. Without loss of generality we may assume that $B_{n_i}(x_i,\varepsilon)\cap Z\not=\emptyset$ for every $i$. In such case we have that
\begin{align*}
    \sum_{i}\exp(-sn_i)&\geq C^{-1}\sum_{i}\eta(B_{n_i}(x,\varepsilon))\geq C^{-1}\eta\left(\bigcup_iB_{n_i}(x,\varepsilon)\right)\\
                       &\geq C^{-1}\eta(Z)>0.
\end{align*}
Therefore, $m(Z,s,\varepsilon)>0$ and hence  $h(Z,f,\varepsilon)\geq s$.
\end{proof}

By Lemma \ref{lemma: F_subset_K_alpha} we have that $h(K_\alpha,f,\varepsilon/2)\geq h(F,f,\varepsilon/2)$.  Lemmas \ref{lemma: exp decay measure} and \ref{lemma:entropy-dist-principles} gives us that $h(F,f,\varepsilon/2)\geq  h_{\mu}(f,\xi)-8\gamma$. Consequently,
 \begin{align*}
  h(K_\alpha,f,\varepsilon/2)\geq 
    h_{\mu}(f,\xi)-8\gamma.\\
 \end{align*}
Thus, combining this observation with \eqref{eq:choice varepsilon for h} and \eqref{eq:escolha de xi} we get that
\begin{align*}
  \mathrm H_\varphi\mathrm{\overline{mdim}_M}\,(f,\alpha,d)-9\gamma
  &\leq  \frac{h_{\mu}(f,\xi)-8\gamma}{|\log5\varepsilon |}\\
  &\leq \frac{h(K_\alpha,f,\varepsilon/2)}{|\log\varepsilon/2|+\log 10}\\
  &\leq \mathrm{\overline{mdim}_M}\,(K_\alpha,f,d) +\gamma.
\end{align*}
Thus, since $\gamma>0$ is arbitrary, the proof of the proposition is complete.
\end{proof}

\begin{proposition}\label{lemma:22}
Under the hypotheses of Theorem \ref{thm1} we have that
$$
\mathrm {H_\varphi\overline{mdim}_M}\,(f,\alpha,d)\geq \Lambda_\varphi\mathrm{\overline{mdim}_M}\,(f,\alpha,d).
$$
\end{proposition}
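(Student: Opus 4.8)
The plan is to build, for each small scale $\varepsilon>0$, an invariant measure $\mu$ with $\int\varphi\,d\mu=\alpha$ whose metric entropy with respect to every partition of diameter $<\varepsilon$ is at least $\Lambda_\varphi(\alpha,2\varepsilon)$, by a Misiurewicz‑type construction from separated subsets of the sets $P(\alpha,\delta,n)$, while keeping track of the two scales at play. First we reduce the claim to this estimate. Pick a sequence $\rho_j\to0$ realizing the $\limsup$ in \eqref{def_Lamb} and set $\varepsilon_j=\rho_j/2$, so that $\Lambda_\varphi(\alpha,2\varepsilon_j)/|\log 2\varepsilon_j|\to\Lambda_\varphi\mathrm{\overline{mdim}_M}(f,\alpha,d)$; since $|\log 2\varepsilon_j|/|\log\varepsilon_j|\to1$, it suffices to produce for each $j$ a measure $\mu_j\in\mathcal M_f(X,\varphi,\alpha)$ with $\inf_{|\xi|<\varepsilon_j}h_{\mu_j}(f,\xi)\ge\Lambda_\varphi(\alpha,2\varepsilon_j)$. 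Indeed this gives $\frac{1}{|\log\varepsilon_j|}\sup_{\mu\in\mathcal M_f(X,\varphi,\alpha)}\inf_{|\xi|<\varepsilon_j}h_\mu(f,\xi)\ge\Lambda_\varphi(\alpha,2\varepsilon_j)/|\log\varepsilon_j|$, whose limit as $j\to\infty$ is $\Lambda_\varphi\mathrm{\overline{mdim}_M}(f,\alpha,d)$, while the left‑hand side is bounded above by $\mathrm{H_\varphi\overline{mdim}_M}(f,\alpha,d)$.

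Now fix $j$ and write $\varepsilon=\varepsilon_j$. Since $P(\alpha,\delta_1,n)\subset P(\alpha,\delta_2,n)$ for $\delta_1<\delta_2$, the quantity $\liminf_n\frac1n\log M(\alpha,\delta,n,2\varepsilon)$ is nondecreasing in $\delta$, so by \eqref{eq:ineq-sep-cov} it is $\ge\Lambda_\varphi(\alpha,2\varepsilon)$ for every $\delta>0$. Choose $\delta_k\downarrow0$, integers $N_k\uparrow\infty$, and maximal $(N_k,2\varepsilon)$‑separated sets $E_k\subset P(\alpha,\delta_k,N_k)$ with $\frac1{N_k}\log\sharp E_k\ge\Lambda_\varphi(\alpha,2\varepsilon)-\frac1k$. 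Set $\sigma_k=\frac1{\sharp E_k}\sum_{x\in E_k}\delta_x$, $\mu_k=\frac1{N_k}\sum_{i=0}^{N_k-1}(f^i)_*\sigma_k$, and let $\mu_j$ be a weak$^*$ accumulation point of $(\mu_k)_k$. The standard telescoping argument shows $\mu_j$ is $f$‑invariant, and since $\left|\frac1{N_k}\sum_{m=0}^{N_k-1}\varphi(f^m(x))-\alpha\right|<\delta_k$ for every $x\in E_k$, we get $\left|\int\varphi\,d\mu_k-\alpha\right|\le\delta_k\to0$, hence $\int\varphi\,d\mu_j=\alpha$ and $\mu_j\in\mathcal M_f(X,\varphi,\alpha)$. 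It is essential that the $\delta_k$ tend to $0$ along this single diagonal sequence, as this is what pins down the limiting integral exactly.

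For the entropy bound, consider first a finite Borel partition $\xi'$ with $|\xi'|<2\varepsilon$ and $\mu_j(\partial\xi')=0$. As $E_k$ is $(N_k,2\varepsilon)$‑separated and the $d_{N_k}$‑diameter of every atom of $(\xi')^{N_k}$ is at most $|\xi'|<2\varepsilon$, the points of $E_k$ lie in pairwise distinct atoms, so $H_{\sigma_k}((\xi')^{N_k})=\log\sharp E_k$. Misiurewicz's grouping inequality then gives, for each fixed $q\in\mathbb N$, $\frac{q}{N_k}\log\sharp E_k\le H_{\mu_k}((\xi')^q)+\frac{2q^2\log\sharp\xi'}{N_k}$; letting $k\to\infty$ along the convergent subsequence (here $\mu_j(\partial(\xi')^q)=0$ gives $H_{\mu_k}((\xi')^q)\to H_{\mu_j}((\xi')^q)$) and then $q\to\infty$ yields $h_{\mu_j}(f,\xi')\ge\liminf_k\frac1{N_k}\log\sharp E_k\ge\Lambda_\varphi(\alpha,2\varepsilon)$. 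Finally, given an arbitrary finite measurable partition $\xi$ with $|\xi|<\varepsilon$, a routine regularity argument produces a partition $\xi'$ with $\mu_j(\partial\xi')=0$ and $|\xi'|<2\varepsilon$ that is arbitrarily close to $\xi$ in the Rokhlin metric — enlarge each atom of $\xi$ slightly to an open set with $\mu_j$‑null boundary and subdivide the remaining set of small measure into small pieces with null boundary — and then $|h_{\mu_j}(f,\xi)-h_{\mu_j}(f,\xi')|\le H_{\mu_j}(\xi\mid\xi')+H_{\mu_j}(\xi'\mid\xi)$ can be made as small as desired, so $h_{\mu_j}(f,\xi)\ge\Lambda_\varphi(\alpha,2\varepsilon)$. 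Taking the infimum over such $\xi$ completes the reduction.

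The step I expect to be the main obstacle is the correct bookkeeping of scales: one must separate at the coarser scale $2\varepsilon$ so that every partition of diameter $<2\varepsilon$ resolves all the separated points, even though $\mathrm{H_\varphi\overline{mdim}_M}$ only provides partitions of diameter $<\varepsilon$; the gap between the two scales is invisible after dividing by $|\log\varepsilon|$ but is exactly what gives room for the Rokhlin approximation by null‑boundary partitions. A secondary, more routine, point is that $\mu\mapsto h_\mu(f,\xi)$ fails to be continuous in general, which is why the constraint $\int\varphi\,d\mu=\alpha$ has to be secured in the single limit rather than recovered after taking $\delta\to0$. We note that the specification property does not enter this argument. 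Overall this is the metric‑mean‑dimension counterpart of the corresponding step in the proof of \cite[Theorem 5.1]{TV}.
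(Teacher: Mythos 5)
Your proposal is correct and follows essentially the same route as the paper: a Misiurewicz-type construction of $\sigma_k=\frac{1}{\sharp E_k}\sum_{x\in E_k}\delta_x$ and its time averages from maximal separated subsets of $P(\alpha,\delta_k,N_k)$, a weak$^*$ limit that is invariant and has $\int\varphi\,d\mu=\alpha$ because $\delta_k\to0$ along the diagonal, and the grouping inequality applied to a null-boundary partition to convert $\log\sharp E_k$ into a lower bound on $h_\mu(f,\cdot)$. The one substantive difference is your two-scale device (separating at scale $2\varepsilon$ while the infimum in $\mathrm{H_\varphi\overline{mdim}_M}$ ranges over partitions of diameter $<\varepsilon$) combined with the Rokhlin-metric approximation by null-boundary partitions. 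The paper instead separates at scale $\varepsilon_j$, establishes the entropy bound only for one chosen partition $\xi(j)$ with $\mu^{(j)}(\partial\xi(j))=0$, and then passes directly to $\inf_{|\xi|<\varepsilon_j}h_{\mu^{(j)}}(f,\xi)$; since an infimum can only be smaller than the value at a particular partition, that last step needs exactly the kind of argument you supply (the weak$^*$ convergence $H_{\mu_k}(\xi^q)\to H_{\mu}(\xi^q)$ is what fails for arbitrary measurable partitions). So your version is, if anything, the more careful one, at the harmless cost of the factor $|\log 2\varepsilon_j|/|\log\varepsilon_j|\to1$.
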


\begin{proof}
Fix $\gamma>0$. Let $\{\varepsilon_j\}_{j\in\mathbb N}$ be a sequence of positive numbers which converges to zero and satisfies 
$$
\Lambda_\varphi\mathrm{\overline{mdim}_M}\,(f,\alpha,d)=\lim_{j\to\infty}\frac{\Lambda_\varphi(\alpha,\varepsilon_j)}{|\log\varepsilon_j|}.
$$
Then, there exists $\varepsilon_0>0$ so that for all $\varepsilon_j\in(0,\varepsilon_0]$ we have 
$$
\frac{\Lambda_\varphi(\alpha,\varepsilon_j)}{|\log\varepsilon_j|}>\Lambda_\varphi\mathrm{\overline{mdim}_M}\,(f,\alpha,d)-\frac{1}{3}\gamma.
$$
In particular, for every $\varepsilon_j\in(0,\varepsilon_0]$,
$$
\Lambda_\varphi(\alpha,\varepsilon_j)>\left(\Lambda_\varphi\mathrm{\overline{mdim}_M}\,(f,\alpha,d)-\frac{1}{3}\gamma\right)\cdot |\log\varepsilon_j|.
$$

Fix $j\in\mathbb N$ such that $\varepsilon_j\in(0,\varepsilon_0]$. By the alternative expression of $\Lambda_\varphi(\alpha,\varepsilon_j)$ given in \eqref{eq:ineq-sep-cov} it follows that there exists a sequence of positive numbers $(\delta_{j,k})_{k\in \mathbb N}$ converging to zero and such that for every $k\in \mathbb{N}$,
\begin{align*}
 \liminf_{n\to\infty}\frac{1}{n}\log M(\alpha,\delta_{j,k},n,\varepsilon_{j})
 &>\Lambda_\varphi(\alpha,\varepsilon_{j})-\frac{2}{3}\gamma\\
 &>\left(\Lambda_\varphi\mathrm{\overline{mdim}_M}\,(f,\alpha,d)-\frac{1}{3}\gamma\right)\cdot|\log\varepsilon_{j}|-\frac{2}{3}\gamma.
\end{align*}
Similarly, there exists a sequence $(n_{j,k})_{k\in \mathbb{N}}$ in $\mathbb N$ satisfying $\lim_{k\to \infty}n_{j,k}=+\infty $ and 
\begin{align}\label{eq: def Njk}
    M_{j,k}&:=M(\alpha,\delta_{j,k},n_{j,k},\varepsilon_{j})\\
    &\phantom{=} >\exp\left(n_{j,k}\left(\left(\Lambda_\varphi\mathrm{\overline{mdim}_M}\,(f,\alpha,d)-\frac{1}{3}\gamma\right)\cdot|\log\varepsilon_{j}|-\gamma\right)\right). \nonumber
\end{align}

Consider a maximal $(n_{j,k},\varepsilon_j)$-separated set $C_{j,k}$ of $P(\alpha,\delta_{j,k},n_{j,k})$. For each $j,k\in\mathbb N$ consider
$$
\sigma_k^{(j)}=\frac{1}{M_{j,k}}\sum_{x\in C_{j,k}}\delta_x,
$$
and 
$$
\mu_k^{(j)}=\frac{1}{n_{j,k}}\sum_{i=0}^{n_{j,k}-1}(f^i)_\ast(\sigma^{(j)}_k)=\frac{1}{M_{j,k}}\sum_{x\in C_{j,k}}\frac{1}{n_{j,k}}\sum_{i=0}^{n_{j,k}-1}\delta_{f^i(x)}.
$$
It is not difficult to see that any accumulation point of $\{\mu_k^{(j)}\}_{k\in\mathbb N}$, say $\mu^{(j)}$, is $f$-invariant (see \cite[Theorem 6.9]{Walters}). Moreover, $\displaystyle\int_X\varphi \;d\mu^{(j)}=\alpha$ for every $j\in \mathbb{N}$. Indeed,
we may assume without loss of generality that $\displaystyle\lim_{k\to\infty}\mu_k^{(j)}=\mu^{(j)}$. Then, for every $j$ and $k$ in $\mathbb{N}$ we have 
$$
\left|\int_X \varphi\;d\mu^{(j)}_k-\alpha\right|\leq \frac{1}{M_{j,k}}\sum_{x\in C_{j,k}}\left|\frac{1}{n_{j,k}}\sum_{i=0}^{n_{j,k}-1}\varphi (f^i(x))-\alpha\right| \leq \delta_{j,k}. 
$$
Thus,
\begin{align*}
    \left|\int_X \varphi\;d\mu^{(j)}-\alpha\right| &\leq \left|\int_X \varphi\;d\mu^{(j)}-\int_X \varphi\;d\mu^{(j)}_k\right| +\left|\int_X \varphi\;d\mu^{(j)}_k-\alpha\right|\\
                                                   &\leq\left|\int_X \varphi\;d\mu^{(j)}-\int_X \varphi\;d\mu^{(j)}_k\right|+\delta_{j,k}.
\end{align*}
Consequently, making $k\to +\infty$ we conclude that $\displaystyle\int_X\varphi \;d\mu^{(j)}=\alpha$ for every $j\in \mathbb N$ as claimed.

For every $j\in \mathbb N$ choose a Borel partition $\xi(j)=\{A_1,\dots,A_\ell\}$ of $X$ so that $\mathrm{diam}(\xi(j))<\varepsilon_{j}$ and $\mu^{(j)}(\partial A_i)=0$ for $0\leq i\leq \ell$ (see \cite[Lemma 8.5(ii)]{Walters}). Then,
$$H_{\sigma^{(j)}_{k}}\left(\bigvee_{i=0}^{n_{j,k}-1}f^{-i}\xi(j)\right)=\log M(\alpha,\delta_{j,k},n_{j,k},\varepsilon_j).$$
Indeed, observe that if $x$ and $y$ belong to the same element of $\bigvee_{i=0}^{n_k-1}f^{-i}\xi(j)$ then $d_{n_{j,k}}(x,y)<\varepsilon_j$. In particular, no element of $\bigvee_{i=0}^{n_k-1}f^{-i}\xi(j)$ can contain more than one point of a  maximal $(n_{j,k},\varepsilon_{j})$-separated set. Thus, exactly $M(\alpha,\delta_{j,k},n_{j,k},\varepsilon_j)$ elements of $\bigvee_{i=0}^{n_{j,k}-1}f^{-i}\xi(j)$ have $\sigma^{(j)}_k$-measure equal to $\frac{1}{M(\alpha,\delta_{j,k},n_{j,k},\varepsilon_j)}$. All others have zero $\sigma^{(j)}_k$-measure.

Fix natural numbers $q$ and $n_{j,k}$ with $1<q<n_{j,k}$ and define, for $0\leq s\leq q-1$, $a(s)=[(n_{j,k}-s)/q]$ where $[p]$ denotes the integer part of $p$. Fix $0\leq s\leq q-1 $. Then, by \cite[Remark 2(ii), p. 188]{Walters} we have that 
$$
\bigvee_{i=0}^{n_{j,k}-1}f^{-i}\xi(j)=\bigvee_{r=0}^{a(s)-1}f^{-(rq+s)}\left(\bigvee_{i=0}^{q-1}f^{-i}\xi(j)\right)\vee \bigvee_{t\in L}f^{-t}\xi(j)
$$
where $L$ is a set with cardinality at most $2q$. Therefore, using \cite[Theorem 4.3(viii)]{Walters} and \cite[Corollary 4.2.1]{Walters},
\begin{align*}
    \log M(\alpha,\delta_{j,k},n_{j,k},\varepsilon_{j})&=H_{\sigma^{(j)}_{k}}\left(\bigvee_{i=0}^{n_{j,k}-1}f^{-i}\xi(j)\right)\\
 &\leq \sum_{i=0}^{a(s)-1}H_{\sigma^{(j)}_{k}}f^{-(rq+s)}\left(\bigvee_{i=0}^{q-1}f^{-i}\xi(j)\right)+ \sum_{t\in L}H_{\sigma^{(j)}_{k}}(f^{-t}\xi(j)   )\\
 &\leq  \sum_{i=0}^{a(s)-1}H_{\sigma^{(j)}_{k}\circ f^{-(rq+s)}}\left(\bigvee_{i=0}^{q-1}f^{-i}\xi(j)\right)+2q\log(\ell).
\end{align*}
Summing the previous inequality over $s$ from $0$ to $q-1$ and using \cite[Remark 2(iii), p. 188]{Walters}, we get that that
$$
q \log M(\alpha,\delta_{j,k},n_{j,k},\varepsilon_{j})\leq \sum_{p=0}^{n_{j,k}-1}H_{\sigma^{(j)}_{k}\circ f^{-p}}\left(\bigvee_{i=0}^{q-1}f^{-i}\xi(j)\right)+2q^2\log(\ell).
$$
Thus, dividing everything by $n_{j,k}$ in the above inequality and using \eqref{eq: def Njk} and the concavity of the map $\mu\to H_{\mu}(\xi)$ we obtain
\begin{align}\label{eq:02}
  \nonumber &q \left(\left(\Lambda_\varphi\mathrm{\overline{mdim}_M}\,(f,\alpha,d)-\frac{1}{3}\gamma\right)\cdot|\log\varepsilon_{j}|-\gamma\right)\\ \nonumber
    &<\frac{q}{n_{j,k}} \log M(\alpha,\delta_{j,k},n_{j,k},\varepsilon_{j})\\
    &\leq H_{\mu^{(j)}_{k}}\left(\bigvee_{i=0}^{q-1}f^{-i}\xi(j)\right)+\frac{2q^2\log(\ell)}{n_{j,k}}.
\end{align}

Now, since the elements of $\bigvee_{i=0}^{q-1}f^{-i}\xi(j)$ have boundaries of $\mu^{(j)}$-measure zero, it follows from the weak convergence of the measures $\mu^{(j)}_{k}$ to $\mu^{(j)}$ that $\displaystyle\lim_{k\to\infty}\mu^{(j)}_{k}(B)=\mu^{(j)}(B)$
for each element $B$ of $\bigvee_{i=0}^{q-1}f^{-i}\xi(j)$ and, therefore,
$$
\displaystyle\lim_{k\to\infty}H_{\mu^{(j)}_k}\left(\bigvee_{i=0}^{q-1}f^{-i}\xi(j)\right)=H_{\mu^{(j)}}\left(\bigvee_{i=0}^{q-1}f^{-i}\xi(j)\right).
$$
Thus, by \eqref{eq:02} we have that 
\begin{align*}
  \nonumber q \left(\left(\Lambda_\varphi\mathrm{\overline{mdim}_M}\,(f,\alpha,d)-\frac{1}{3}\gamma\right)\cdot|\log\varepsilon_{j}|-\frac{2}{3}\gamma\right)     &\leq H_{\mu^{(j)}}\left(\bigvee_{i=0}^{q-1}f^{-i}\xi(j)\right).
\end{align*}
Dividing both sides of the previous inequality by $q$ and letting $q$ go to $+\infty$ we obtain
$$
\left(\Lambda_\varphi\mathrm{\overline{mdim}_M}\,(f,\alpha,d)-\frac{1}{3}\gamma\right)\cdot|\log\varepsilon_{j}|- \frac{2\gamma}{3}\leq h_{\mu^{(j)}}(f,\xi(j)), \text{ for all }j\in\mathbb N,
$$
which implies that 
$$
\Lambda_\varphi\mathrm{\overline{mdim}_M}\,(f,\alpha,d)-\frac{1}{3}\gamma\leq \frac{h_{\mu^{(j)}}(f,\xi(j))+\frac{2}{3}\gamma}{|\log\varepsilon_{j}|}, \text{ for all }j\in\mathbb N.
$$
Therefore,
\begin{align*}
    \Lambda_\varphi\mathrm{\overline{mdim}_M}\,(f,\alpha,d)-\frac{1}{3}\gamma
    \leq\frac{\inf_{|\xi|<\varepsilon_j} h_{\mu^{(j)}}(f,\xi)+\gamma}{|\log\varepsilon_{j}|} \text{ for all }j\in\mathbb N
\end{align*}
and consequently,
\begin{align*}
    \Lambda_\varphi\mathrm{\overline{mdim}_M}\,(f,\alpha,d)-\frac{1}{3}\gamma
    &\leq\limsup_{j\to\infty}\frac{\sup_{\nu\in\mathcal M_f(X,\alpha,d)}\inf_{|\xi|<\varepsilon_j} h_\nu(f, \xi)}{|\log\varepsilon_{j}|}\\
    &\leq \limsup_{\varepsilon\to0}\frac{\sup_{\nu\in\mathcal M_f(X,\alpha,d)}\inf_{|\xi|<\varepsilon} h_\nu(f,\xi)}{|\log\varepsilon|}\\
    &=\mathrm H_\varphi\mathrm{\overline{mdim}_M}\,(f,\alpha,d)
\end{align*}
completing the proof of the proposition.
\end{proof}

Finally, the first claim of Theorem \ref{thm1} follows directly by combining Propositions \ref{lemma:20}, \ref{lemma:21} and \ref{lemma:22}. For the general case, that is, without the assumption that the quantities $\mathrm{\overline{mdim}_M}\,\Big(K_\alpha,f, d\Big)$, $\Lambda_\varphi\mathrm{\overline{mdim}_M}\,(f,\alpha,d)$ and $\mathrm H_\varphi\mathrm{\overline{mdim}_M}\,(f,\alpha,d)$ are finite, we observe that a simple modification of our proof show us that if one of the quantities is infinite then the other two must also be infinite and, therefore, the first claim of Theorem \ref{thm1} is still true. As for the second claim in Theorem \ref{thm1}, again one can easily see that simple adaptations of the previous proof yield the desired conclusion. The proof of Theorem \ref{thm1} is complete.

\section{Examples}\label{sec: examples}
In this section we present some examples of settings with positive upper/lower metric mean dimension where our results may be applied. Moreover, we also present a simple application of Theorem \ref{thm1} to calculate $\mathrm{mdim_M}\,\Big(K_\alpha,f, d\Big)$.

\begin{example}\label{ex: shift interval}
Let $(Z,D)$ be a compact metric space with upper box-counting dimension $\mathrm{\overline{dim}_B}\;Z<\infty$. Let us consider $X=Z^\mathbb N$ endowed with the metric 
$$
d((x_n)_{n\in\mathbb N},(y_n)_{n\in\mathbb N})=\sum_{n=1}^\infty\frac{1}{2^n}D(x_n,y_n)
$$
and let $\sigma: X\to X$ be the shift map. It is well known that $\sigma$ has the specification property and 
 $\mathrm{\overline{mdim}_M}\,(X,\sigma,d)=\mathrm{\overline{dim}_B}\;Z$ and  $\mathrm{\underline{mdim}_M}\,(X,\sigma,d)=\mathrm{\underline{dim}_B}\;Z$ (see for instance \cite{Acevedo-Rodrigues}). In particular, we may apply Theorem \ref{thm1} to it getting, for instance, that for any $\varphi\in C^0(X,\mathbb R)$ and $\alpha\in\mathbb R$,
$$
\mathrm{\overline{mdim}_M}\,\Big(K_\alpha,\sigma, d\Big)=\Lambda_\varphi\mathrm{\overline{mdim}_M}\,(\sigma,\alpha,d)= \mathrm{H_\varphi\overline{mdim}_M}\,(\sigma,\alpha,d).
$$
\end{example}\label{ex1}

\begin{example}
Let $X=[0,1]^{\mathbb N}$ be endowed with the metric induced by the Euclidian distance in $[0,1]$ as in the previous example and consider the set 
$$
E=\left\{\{x^{(i,j)}\}_{i,j\in\mathbb N}\in X:x^{(i,j)}_n=\frac{1}{2^j} \text{ if }i=n\text{ and }x^{(i,j)}_n=0 \text{ if }i\not=n\right\}\cup\{e\},
$$
where $e=(0,0,\dots)$, which is closed and shift invariant. If $2^E$ denotes the space of subsets of $X$ endowed with the Hausdorff distance $d_H$, by \cite[Proposition 3.6]{HW} we have that 
$$
\mathrm{\overline{mdim}_{M}}\,\Big(E,\sigma,d\Big)=0 \text{ and } \mathrm{mdim_{M}}\,\Big(2^E,\sigma_\sharp,d_H\Big)=1,
$$
where $\sigma_\sharp$ is the induced map by $\sigma$ on the hyperspace $2^E$. By \cite[Proposition 4]{BAS} we have that $\sigma_\sharp$ has the specification property and then Theorem \ref{thm1}  may be applied.
\end{example}

\begin{example}
 It was proved in \cite{ARA,CRV} that for $C^0$-generic homeomorphisms acting on a compact and smooth manifold $X$ with dimension greater than one, the upper metric mean dimension with respect to the smooth metric coincides with the dimension of the manifold. Moreover, they also proved that the set of homeomorphisms with positive lower metric mean dimension is $C^0$ dense in the set of homeomorphisms of $X$. Now, in order to be able to apply Theorem \ref{thm1} to elements of those sets, we need to guarantee that they have the specification property. For this purpose we restrict ourselves to the set of conservative homeomorphisms, where the specification property holds $C^0$-generically. 
 
 We fix a good Borel probability measure $\mu\in\mathcal M(X)$,  i.e., a probability measure that satisfies the following conditions:
\medskip

\noindent $(C_1)\quad$ [Non-atomic] For every $x \in X$ one has $\mu(\{x\})=0$;
\smallskip

\noindent $(C_2) \quad$ [Full support] For every nonempty open set $U \subset X$ one has $\mu(U)>0$;
\smallskip

\noindent $(C_3) \quad$ [Boundary with zero measure] $\mu(\partial X)=0$.
\medskip\\
In a forthcoming paper by S. Roma\~na and G. Lacerda it is proved that there exists a Baire generic  subset of $\mathrm{Homeo}_\mu(X,d)$ (the set of conservative homeomorphisms on $X$) with metric mean dimension equal to the dimension of $X$. Consequently, since according to \cite{GL} the specification property is a Baire generic property in $\mathrm{Homeo}_\mu(X,d)$, there exists a $C^0$-open and dense subset of $\mathrm{Homeo}_\mu(X,d)$ whose elements have positive upper metric mean dimension and the specification property and, in particular, Theorem \ref{thm1} may be applied to those elements.
\end{example}

In the next two examples we consider the specification property for linear operators acting on Banach spaces and we start by recalling the appropriate definition for this setting. Let $B$ be a Banach space over $\mathbb K$ ($= \mathbb R$ or $\mathbb{C}$) and $T:B\to B$ be a linear operator. We say that $T$ has the \emph{operator specification property} if there exists a sequence of $T$-invariant sets $\{K_m\}_{m\in\mathbb N}$ with $B=\overline{\cup_{m\in \mathbb N}K_m}$ for which $T|_{K_m}:K_m\to K_m$ satisfies the specification property. We emphasize that the sets $K_m$ do not need be compact, although in the all known examples we have compactness for such sets.
\begin{example}
Fix $\nu=(\nu_n)_{n\in\mathbb N}\in \mathbb R^{\mathbb N}$ so that $\nu_n>0$ for all $n\in\mathbb N$ and  $\displaystyle\sum_{n=1}^\infty\nu_n<\infty$. Given $1\leq p<+\infty$, consider 
$$
\ell^p(\nu)=\left\{(x_n)_{n\in\mathbb N}\in \mathbb K^{\mathbb N}: \|(x_n)_{n\in\mathbb N}\|_{\ell_p(\nu)}:=\left(\sum_{n=1}^\infty|x_n|^p\nu_n\right)^{\frac{1}{p}}<\infty\right\},
$$
which is a Banach space, and the shift map $\sigma:\ell^p(\nu)\to \ell^p(\nu)$. By \cite[Theorem 2.1]{BMP} we have that $\sigma:\ell^p(\nu)\to\ell^p(\nu)$
has the operator specification property with $K_m=mK$, $m\in \mathbb{N}$, where $K$ is the compact set $K=\{(x_n)_{n\in\mathbb N}\in \ell^p(\nu): |x_n|\leq 1 \text{ for all }n\in\mathbb N\}$. We now observe that $T|_{K}:K\to K$ has positive metric mean dimension. More precisely,

\begin{lemma}\label{lem: metric mean Linear op}
$$
\mathrm{\overline{mdim}_M}\,\Big(K,\sigma, \|\cdot\|_{\ell_p(\nu)}\Big)=\mathrm{\underline{mdim}_M}\,\Big(K,\sigma, \|\cdot\|_{\ell_p(\nu)}\Big)=1.
$$ 
\end{lemma}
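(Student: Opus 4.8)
The plan is to estimate the separated-set growth rate $h(\sigma|_K,\varepsilon)=\limsup_{n\to\infty}\frac1n\log s(\sigma|_K,n,\varepsilon)$ up to an additive $O(1)$ as $\varepsilon\to0$, showing that $h(\sigma|_K,\varepsilon)=|\log\varepsilon|+O(1)$; dividing by $|\log\varepsilon|$ and letting $\varepsilon\to0$ then gives $\mathrm{\overline{mdim}_M}\,\big(K,\sigma,\|\cdot\|_{\ell_p(\nu)}\big)=1$. First I would record the elementary facts that $\sigma(K)\subseteq K$ (so $\sigma|_K$ is a genuine system and the dynamical metrics $d_n$ make sense), that $K$ is exactly the set of sequences in the closed unit ball $\overline B=\{z\in\mathbb K:|z|\le1\}$ equipped with $d(x,y)=\big(\sum_{m\ge1}|x_m-y_m|^p\nu_m\big)^{1/p}$, that $\sigma$ is the coordinate shift, and hence that $d_n(x,y)^p=\max_{0\le i<n}\sum_{m\ge1}|x_{i+m}-y_{i+m}|^p\nu_m$. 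The whole argument rests on the interplay between $\sum_m\nu_m<\infty$ (so only finitely many coordinates matter at a fixed scale) and $\nu_1>0$ (so a suitable shift exposes any given coordinate in the first slot), together with the elementary fact that covering and separating numbers of $\overline B$ at scale $\varepsilon$ are comparable to $\varepsilon^{-1}$.

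\textbf{Upper bound $\le 1$.} Given $\varepsilon>0$, I would pick $M=M(\varepsilon)\in\mathbb N$ with $(\mathrm{diam}\,\overline B)^p\sum_{m>M}\nu_m<(\varepsilon/2)^p$ and a resolution $\rho\asymp\varepsilon$ with $\rho^p\sum_m\nu_m<(\varepsilon/2)^p$. If $x,y\in K$ satisfy $|x_m-y_m|\le\rho$ for $1\le m\le n+M-1$ and $x_m=y_m$ for $m\ge n+M$, then in each block $\sum_{m\ge1}|x_{i+m}-y_{i+m}|^p\nu_m$ with $0\le i<n$ the part $m\le M$ lies inside the controlled window and is $\le\rho^p\|\nu\|_1<(\varepsilon/2)^p$, while the tail $m>M$ is $\le(\mathrm{diam}\,\overline B)^p\sum_{m>M}\nu_m<(\varepsilon/2)^p$; hence $d_n(x,y)<\varepsilon$. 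Covering each of the first $n+M-1$ coordinates by a $\rho$-net of $\overline B$ of cardinality $\lesssim\varepsilon^{-1}$ thus yields an $\varepsilon$-spanning set of $K$ in $d_n$ of size $\le (C/\varepsilon)^{n+M-1}$, so, passing from spanning sets at scale $\varepsilon/2$ to separated sets at scale $\varepsilon$, $s(\sigma|_K,n,\varepsilon)\le(C'/\varepsilon)^{n+M(\varepsilon/2)-1}$; this gives $h(\sigma|_K,\varepsilon)\le|\log\varepsilon|+O(1)$.

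\textbf{Lower bound $\ge 1$.} Here I would use $\nu_1>0$ to build large separated sets. Choose $\varepsilon_0\asymp\varepsilon$ with $\varepsilon_0^p\nu_1>(2\varepsilon)^p$ and a maximal $\varepsilon_0$-separated subset $\mathcal M$ of $\overline B$, which has cardinality $\gtrsim\varepsilon^{-1}$. Let $E_n\subset K$ be the set of points whose coordinates $1,\dots,n$ range freely over $\mathcal M$ and all of whose coordinates $m>n$ equal a fixed value, so $\#E_n=(\#\mathcal M)^n$. If $x\ne y$ in $E_n$ they differ at some coordinate $j_0\le n$ with $|x_{j_0}-y_{j_0}|\ge\varepsilon_0$; taking $i=j_0-1\in\{0,\dots,n-1\}$ gives $d_n(x,y)^p\ge|x_{j_0}-y_{j_0}|^p\nu_1\ge\varepsilon_0^p\nu_1>(2\varepsilon)^p$, so $E_n$ is $(n,\varepsilon)$-separated. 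Hence $s(\sigma|_K,n,\varepsilon)\ge(c/\varepsilon)^n$ and $h(\sigma|_K,\varepsilon)\ge|\log\varepsilon|+O(1)$. Combining the two bounds proves the lemma.

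\textbf{Main obstacle.} The bulk of the work is just the $O(1)$ bookkeeping; the one point that genuinely needs care is that the cutoff $M(\varepsilon)$ and all additive constants be fixed \emph{before} $n$ enters the picture — which is precisely what summability of $\nu$ allows — so that they do not depend on $n$ and are therefore killed when one divides by $|\log\varepsilon|$ and lets $\varepsilon\to0$. I expect this uniformity in $n$, rather than any individual estimate, to be the only mild subtlety.
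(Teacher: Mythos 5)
Your proof is correct and follows essentially the same route as the paper's: the lower bound comes from an $\asymp\varepsilon^{-1}$-point grid placed independently in each of the first $n$ coordinates, with separation detected in the first slot via $\nu_1>0$, and the upper bound from discretizing the first $n+O_{\varepsilon}(1)$ coordinates at scale $\asymp\varepsilon$, the tail being controlled by $\sum_m\nu_m<\infty$. (Both your argument and the paper's implicitly take $\mathbb{K}=\mathbb{R}$, since the covering number of the unit ball of $\mathbb{K}$ at scale $\varepsilon$ is $\asymp\varepsilon^{-1}$ only in that case.)
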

\begin{proof}
Given $\varepsilon >0$ and $n\in\mathbb N$, we observe that 
$$
\left\{x\in K: x_i\in\left \{0,\frac{\varepsilon}{\sqrt[p]{\nu_1}},\frac{2\varepsilon}{\sqrt[p]{\nu_1}},\dots,\left\lfloor1/\frac{\varepsilon}{\sqrt[p]{\nu_1}}\right\rfloor\frac{\varepsilon}{\sqrt[p]{\nu_1}}\right\} \text{ for all }1\leq i \leq n\right\}
$$
is a $(n,\varepsilon)$-separated set in $K$. In particular,
\begin{equation}\label{eq: aux example4.1}
\begin{split}
\mathrm{\underline{mdim}_M}\,\Big(K,\sigma, \|\cdot\|_{\ell_p(\nu)}\Big)&=\liminf_{\varepsilon\to0}\frac{h(\sigma,\varepsilon)}{|\log\varepsilon|}\\
&\geq \liminf_{\varepsilon\to0}\frac{\limsup_{n\to\infty}\frac1n\left|\log \left(\left\lfloor1/\frac{\varepsilon}{\sqrt[p]{\nu_1}}\right\rfloor\right)^n\right|}{|\log\varepsilon|}
=1. 
\end{split}
\end{equation}

In order to get the reverse inequality, let $\ell\in\mathbb N$ be so that $\displaystyle\sum_{n\geq \ell}\nu_n<\frac{\varepsilon}{2}$ and define $M=\left(\displaystyle\sum_{k\in \mathbb N}\nu_k\right)^{1/p}>0$. We consider an open cover of $[-1,1]$ by 
$$
I_k=\left(\frac{(k-1)\varepsilon}{12M},\frac{(k+1)\varepsilon}{12M}\right), \text{ for }-\lfloor12M/\varepsilon\rfloor\leq k \leq \lfloor12M/\varepsilon\rfloor.
$$
Note that each $I_k$ has length $\displaystyle\frac{\varepsilon}{6M}$. Given $n\geq 1$, let us consider the following 
open cover of $K^\mathbb{N}$:
$$
\{x:x_1\in I_{k_1},x_2\in I_{k_2},\dots,x_{n+\ell}\in I_{k_{n+\ell}}\},
$$
where  $-\lfloor12M/\varepsilon\rfloor\leq k_1,\dots,k_{n+\ell} \leq \lfloor12M/\varepsilon\rfloor$.
Observe that each element of this open cover has diameter less than $\varepsilon$ with respect to the metric $d_n$ (induced by $\|\cdot\|_{\ell_p(\nu)}$). 
So,
\begin{align*}
  \limsup_{\varepsilon\to0}\frac{h(\sigma,\varepsilon)}{|\log\varepsilon|}
  \leq \limsup_{\varepsilon\to0}\frac{\limsup_{n\to\infty}\frac1n\log \left(2\left\lfloor12M/\varepsilon\right\rfloor\right)^{n+\ell+1}}{|\log\varepsilon|}=1.  
\end{align*}
Hence,
\begin{equation}\label{eq: auxil example4.2}
    \mathrm{\overline{mdim}_M}\,\Big(K,\sigma, \|\cdot\|_{\ell_p(\nu)}\Big)\leq 1.
\end{equation}
Finally, combining \eqref{eq: aux example4.1} and \eqref{eq: auxil example4.2} we get the desired result.
\end{proof}

As a consequence of the previous proof we also get that
$$
\mathrm{mdim_M}\,\Big(K_m,\sigma, \|\cdot\|_{\ell_p(\nu)}\Big)=1
$$ 
for  all $m\in\mathbb N$. In particular, we may apply Theorem \ref{thm1} to $\sigma|K_m:K_m\to K_m$ for every $m\in \mathbb N$.

\end{example}

\begin{example}\label{ex: weighted shifts}
Another class of examples is given by the weighted shifts. Let $\nu=(\nu_n)_{n\in\mathbb N}=(1)_{n\in\mathbb N}$ and consider $\ell^p(\nu)$ as in the previous example. Observe that in this case $\ell^p(\nu)=\ell^p$. Let $w=(w_n)_{n\in\mathbb N}$ be a weight sequence and define the \emph{weighted shift} on $\ell^p$ as 
$B_w((x_n)_{n\in\mathbb N})=(w_{n+1}x_{n+1})_{n\in\mathbb N}$. It was observed in \cite[p. 602]{BMP} that if one considers $a=(a_n)_{n\in\mathbb N}$ given by 
$$
a_1=1 \text{ and }a_n:=w_2\dots w_n, \text{ for all }n>1,
$$
and $\bar \nu = (\bar \nu_n)_{n\in \mathbb N}$ given by
$$
\bar\nu_n=\frac{1}{\prod_{j=2}^n|w_j|^p}, \text{ for all }n\in\mathbb N, 
$$
then 
$$
\phi_a:(x_{n})_{n\in\mathbb N}\in \ell^p\mapsto \phi_a((x_{n})_{n\in\mathbb N})=(a_1x_1,a_2x_2,\dots)\in \ell^p(\bar\nu)
$$
defines a topological conjugacy between the weighted shift and the backward shift given in the previous example. Moreover, they observed that this topological conjungacy is also an isometry, which implies that 
\[\mathrm{\overline{mdim}_M}\,\Big(\phi_a^{-1}(K_m),B_w, \|\cdot\|_{\ell_p}\Big) =\mathrm{\underline{mdim}_M}\,\Big(\phi_a^{-1}(K_m),B_w, \|\cdot\|_{\ell_p}\Big)=1\]
for all $m\in\mathbb N$. Furthermore, if $\sum_{n=1}^\infty\bar\nu_n<\infty$ we have that $B_w$ has the operator specification property (see \cite[Theorem 2.3]{BMP}) and then we are in the context of  Theorem \ref{thm1}.
\end{example}

\begin{example}
    Let us consider $X=[0,1]^{\mathbb Z}$ endowed with the metric 
  \[
d((x_n)_{n\in\mathbb Z},(y_n)_{n\in\mathbb Z})=\sum_{n\in \mathbb Z}\frac{1}{2^{|n|}}|x_n-y_n|
\]
and let $\sigma: X\to X$ be the left shift map. Similarly to Example \ref{ex: shift interval}, $\sigma$ has the specification property and, moreover, $\mathrm{mdim_M}\,\left(X,\sigma, d\right)=1$. In particular, Theorem \ref{thm1} may be applied in this setting. Let $\lambda$ be the Lebesgue measure on $[0,1]$ and consider $\mu=\lambda^{\mathbb Z}$. Then, it is a well known fact that $\mu$ is ergodic. Given $\varphi\in C^0(X,\mathbb R)$, take $\alpha=\int \varphi d\mu$. We will show that $\mathrm{mdim_M}\,\left(K_\alpha,\sigma, d\right)=1$. In order to do it we recall the definition of \emph{Brin-Katok local entropy}: for an ergodic measure $\mu \in \mathcal{M}_\sigma(X)$, $\varepsilon>0$ and a point $x \in X$, let us consider
\[
h_{\mu}^{BK}(\varepsilon,x)=\limsup_{n \to \infty}-\frac{1}{n}\log\mu(B_n(x,\varepsilon)),
\]
where $B_{n}(x,\varepsilon)$ is defined as in Section \ref{sec: def and state}. Since $\mu$ is ergodic, the map $x \mapsto h_{\mu}^{BK}(\varepsilon,x)$ is constant $\mu$-almost everywhere. Denote this constant by $h_{\mu}^{BK}(\varepsilon)$. Then, we have the following observation.
\begin{lemma}[See \cite{GS}.]\label{lem: Brin Katok lower bound}
For any ergodic measures $\mu \in \mathcal{M}_\sigma(X)$ and any $\varepsilon>0$,
\[
h_{\mu}^{BK}(\varepsilon)\leq\inf_{|\xi|<\varepsilon}h_{\mu}(\sigma,\xi),
\]
where the infimum is taken over all finite measurable partitions of $X$ with diameter smaller than $\varepsilon$.
\end{lemma}
Therefore, considering the measure $\mu=\lambda^{\mathbb Z}$ given above and using Theorem \ref{thm1} we get that
\[
\begin{split}
\mathrm{\underline{mdim}_M}\,\Big(K_\alpha,\sigma, d\Big)&= \mathrm H_\varphi\mathrm{\underline{mdim}_M}\,(f,\alpha,d)\\
&\geq \liminf_{\varepsilon\to0}\frac{1}{|\log\varepsilon|}\inf_{|\xi|<\varepsilon}h_\mu(f,\xi)\\
&\geq \liminf_{\varepsilon\to0}\frac{1}{|\log\varepsilon|}h_{\mu}^{BK}(\varepsilon).\\
\end{split}
\]
Now, in \cite[Example 12]{Shi} it is proved that $\liminf_{\varepsilon\to0}\frac{1}{|\log\varepsilon|}h_{\mu}^{BK}(\varepsilon)\geq 1$. Consequently,
\[
\begin{split}
1&=\mathrm{mdim_M}\,\left(X,\sigma, d\right)= \mathrm{\overline{mdim}_M}\,\left(X,\sigma, d\right)\\
&\geq \mathrm{\overline{mdim}_M}\,\left(K_\alpha,\sigma, d\right)\geq \mathrm{\underline{mdim}_M}\,\Big(K_\alpha,\sigma, d\Big) \geq 1
\end{split}
\]
and thus, $\mathrm{mdim_M}\,\left(K_\alpha,\sigma, d\right)= \mathrm{\overline{mdim}_M}\,\left(K_\alpha,\sigma, d\right)= \mathrm{\underline{mdim}_M}\,\left(K_\alpha,\sigma, d\right) =1$ as claimed. We observe that our Theorem \ref{thm1} combined with Lemma \ref{lem: Brin Katok lower bound} may be very useful for giving lower bounds for $\mathrm{\overline{mdim}_M}\,\Big(K_\alpha,\sigma, d\Big)$ and $\mathrm{\underline{mdim}_M}\,\Big(K_\alpha,\sigma, d\Big)$. In fact, it is enough to take an ergodic measure $\mu$ satisfying $\alpha=\int \varphi d\mu$ and estimate $h_{\mu}^{BK}(\varepsilon)$ which, in general, may be easier than estimating $\inf_{|\xi|<\varepsilon}h_{\mu}(\sigma,\xi)$.
\end{example}

\medskip{\bf Acknowledgements.} 
We would like to thank the referees for their useful comments that helped us to improve our paper. L.B. was partially supported by a CNPq-Brazil PQ fellowship under Grant No. 307633/2021-7.


\end{document}